\documentclass[11pt]{article}
\usepackage[margin=25mm]{geometry}

\usepackage{tabularx}
\usepackage{indentfirst}
\usepackage{setspace}
\usepackage{wrapfig}
\usepackage{bm}

\usepackage{url}
\usepackage{mathrsfs}

\usepackage{graphicx}

\usepackage{tikz-cd}
\usepackage{amscd}
\usepackage{enumerate}
\usepackage{mathtools}
\usepackage{amsmath}
\usepackage{amssymb}
\usepackage{amsthm}
\usepackage{amsfonts}
\usepackage{usebib}
\usepackage{comment}
\usepackage{dsfont}

\newcommand{\N}{\mathbb{N}}
\newcommand{\Z}{\mathbb{Z}}
\newcommand{\Q}{\mathbb{Q}}
\newcommand{\R}{\mathbb{R}}
\newcommand{\C}{\mathbb{C}}

\DeclareMathOperator{\Hom}{Hom}

\DeclareMathOperator{\Ker}{Ker}
\DeclareMathOperator{\Ima}{Im}

\newcommand{\restr}[2]{{
  \left.\kern-\nulldelimiterspace
  #1
  \vphantom{\big|}
  \right|_{#2}
  }}

\theoremstyle{plain}
\newtheorem{Theorem}{Theorem}[section]
\newtheorem{Proposition}[Theorem]{Proposition}
\newtheorem{Lemma}[Theorem]{Lemma}

\theoremstyle{definition}

\newtheorem{Remark}[Theorem]{Remark}

\title{Blanchfield pairings and twisted Blanchfield pairings of torus knots}
\author{Koki Yanagida 
\footnote{
\leftline{E-mail address: {\tt yngdk127@cc.saga-u.ac.jp}}
\leftline{
\,\,\,\,\,\, Faculty of Science and Engineering, Saga University, 1 Honjo,}
\leftline{
\,\,\,\,\,\, Honjo-machi, Saga 840–8502, Japan}}
}
\date{}
\begin{document}
\maketitle

\begin{abstract}\sloppy
We give explicit matrix presentations of the Blanchfield pairing and certain twisted Blanchfield pairings of the $(m,n)$-torus knot $T(m,n)$.
Our method uses a taut identity realizing a genus-two Heegaard splitting of the manifold $X_{T(m,n)}$ obtained from $S^3$ by $0$-surgery along $T(m,n)$.
The taut identity allows us to construct a chain complex of $X_{T(m,n)}$ with few generators.
As a result, we obtain explicit matrix presentations of the Blanchfield pairing of $T(m,n)$.
Moreover, for each Casson-Gordon type metabelian representation and for suitable roots of unity $\xi$ depending on the representation, 
we describe the $(t-\xi)$-primary part of the associated twisted Alexander module and give an explicit description of the restriction of the twisted Blanchfield pairing to this primary summand.
\end{abstract} 

\begin{center}
\normalsize
\baselineskip=11pt
{\bf Keywords} \\
Blanchfield pairing, twisted Blanchfield pairing, torus knots, identities among relations
\end{center}
\begin{center}
\normalsize
\baselineskip=11pt
{\bf Subject Code } \\
Primary 57K10; Secondary 57M27, 20F05.
\end{center}
\large 


\section{Introduction}
For a knot $K$, the Blanchfield pairing is a linking form associated with $K$ \cite{Bla}.
More precisely, it is a nonsingular Hermitian sesquilinear pairing defined on the Alexander module of $K$ with values in a quotient of the Laurent polynomial ring $\Z[t^{\pm 1}]$.
More generally, given a representation of the knot group, one can define a twisted Blanchfield pairing in an analogous way. 
This yields a linking form on the twisted Alexander module of $K$ and the representation \cite{Pow, MP}.
The Blanchfield pairing plays an important role in the study of knot concordance.
Classically, the isomorphism class of the Blanchfield pairing is in one--to--one correspondence with the $S$--equivalence class of a Seifert matrix, and it has been a key tool in the study of concordance for high-dimensional knots \cite{Kea}.
Moreover, several classical invariants can be recovered from the Blanchfield pairing, such as the signatures and the algebraic unknotting number of a knot \cite{BF1, BF2}.
On the other hand, the twisted Blanchfield pairing can reflect four–dimensional properties of knots more strongly than the classical Blanchfield pairing.
In particular, twisted Blanchfield pairings have been crucial in the study of higher–order structures in the knot concordance group \cite{COT, Pow, MP}.  
For example, they can be used to recover a certain difference of Casson–Gordon signatures \cite{BCP2}, and to prove linear independence of certain families of iterated torus knots in the knot concordance group \cite{CKP}.

Therefore, for a given knot $K$, it is useful to have an explicit description of its Blanchfield pairing or twisted Blanchfield pairing.  
In the 1970s, the Blanchfield pairing was first described explicitly in terms of Seifert matrices \cite{Kea, FP}.  
Subsequently, a method to compute both the Blanchfield pairing and the twisted Blanchfield pairing based on a Wirtinger presentation was proposed in \cite{MP}.  
However, in both approaches, the size of the presentation matrices of these pairings grows with the Seifert genus and the number of crossings.
As a consequence, for knots whose Seifert genus and crossing number are both large, explicit computations become difficult using these approaches.
For example, the Seifert genus and the number of crossings of the torus knot $T(m,n)$ are bounded below by $(m-1)(n-1)/2$, and therefore, concise and explicit formulas for the Blanchfield and twisted Blanchfield pairings for $T(m,n)$ have not yet been obtained.
For related computational results, see \cite{Kea, Nos2, BCP3}.
 
The aim of this paper is to give explicit descriptions of the (twisted) Blanchfield pairing of torus knots.
More precisely, we derive closed formulas for the Blanchfield pairing over \(\mathbb{Z}[t^{\pm1}]\), and partial formulas for twisted Blanchfield pairings associated to metabelian representations of Casson--Gordon type.
Our approach does not rely on Seifert matrices and Wirtinger presentations.
Instead, we employ a taut identity realizing a genus--two Heegaard splitting of the $3$-manifold $X_{T(m,n)}$, obtained from $S^3$ by $0$-surgery along $T(m,n)$ (Theorem~\ref{thm:tiden}).
This taut identity provides an explicit cellular chain complex of the universal cover of \(X_{T(m,n)}\) together with a diagonal approximation.
A key point is that the modules and homomorphisms appearing in this chain complex can be chosen with sizes that do not grow with the Seifert genus or the crossing number.
Consequently, we can compute the (twisted) Blanchfield pairings directly from their definition using small matrices, making explicit calculations possible even for torus knots of large genus and crossing number.

We now present the main results of this paper.
We begin with a computational result for the Blanchfield pairing for the torus knot $T(m,n)$.
Let \(m\) and \(n\) be coprime integers greater than \(1\).
Moreover, let \(r\) and \(s\) be the unique integers satisfying
\[mr+ns=1,\qquad -n<r<0<s<m.\]
Let \(T(m,n)\) denote the $(m,n)$-torus knot.
\begin{Theorem}\label{thm:preBl}
Let 
$
\Delta_{T(m,n)}
\coloneqq
 t^{-(m-1)(n-1)/2}\,
 \frac{(1-t)(1-t^{mn})}{(1-t^{m})(1-t^{n})}
$
and
\[
B(m,n)
\coloneqq
 t^{-(m+1)(n+1)/2}\,
 \frac{(1-t^{mr})(1-t^{ns})(t^{m}-t^{n})^{2}}{(1-t^{m})(1-t^{n})}.
\]
Then the Blanchfield pairing associated with $T(m,n)$ is isometric to the following sesquilinear form:
\begin{equation*}
\begin{aligned}
\Z[t^{\pm 1}]/(\Delta_{T(m,n)}) \times \Z[t^{\pm 1}]/(\Delta_{T(m,n)})
&\longrightarrow \Z[t^{\pm 1}]/(\Delta_{T(m,n)}); \\
(f(t), g(t))
&\longmapsto
f(t^{-1}) \, g(t) \, t^{mn} \, B(m,n).
\end{aligned}
\end{equation*}
\end{Theorem}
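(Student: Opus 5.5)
We will compute the Blanchfield pairing directly from its definition, using the small chain complex supplied by the taut identity of Theorem~\ref{thm:tiden}. Concretely, we take the presentation $\pi_1(X_{T(m,n)})=\langle x,y \mid x^m y^{-n}\ \text{(twisted by the surgery relation)}\rangle$ coming from the genus-two Heegaard splitting. The resulting cellular chain complex of the universal cover has ranks $1,2,2,1$, with $\partial_1$ and $\partial_2$ given by Fox derivatives and $\partial_3$ read off from the taut identity. Passing to the infinite cyclic cover via the abelianization $x\mapsto t^{n}$, $y\mapsto t^{m}$, we get an explicit complex $C_*$ of free $\Z[t^{\pm1}]$-modules. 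Its homology $H_1(C_*)$ is the Alexander module, and we identify it with $\Z[t^{\pm1}]/(\Delta_{T(m,n)})$, generated by an explicit cycle $c\in C_1$. This identification should be a short Smith-normal-form computation with $2\times 2$ matrices whose entries are ratios of the cyclotomic-type polynomials $(1-t^{mn})/(1-t^m)$, $(1-t^{mn})/(1-t^n)$ and $(1-t)$. The integers $r,s$ with $mr+ns=1$ enter here, to write $t=(t^n)^s(t^m)^r$ and so to produce a meridian word and the generator.

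Next we follow the definition $\mathrm{Bl}(a,b)=\frac{1}{\Delta}\,\langle \widetilde{PD}(a),\, \tilde b\rangle$. First we find $\tilde c\in C_2$ with $\partial_2\tilde c=\Delta_{T(m,n)}\, c$, which amounts to solving a $2\times2$ linear system over $\Z[t^{\pm1}]$. Then we apply Poincaré duality at the chain level. The diagonal approximation coming from the taut identity gives an explicit chain homotopy equivalence $C^{3-*}\to C_*$ (equivalently, an intersection pairing $C_2\times C_1\to\Z[t^{\pm1}]$), so we can evaluate $\mathrm{Bl}(c,c)=\overline{\langle \tilde c, c\rangle}/\Delta$ modulo $\Z[t^{\pm1}]$. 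Sesquilinearity then gives $\mathrm{Bl}(f c,gc)=f(t^{-1})g(t)\,\mathrm{Bl}(c,c)$. It remains to check that $\mathrm{Bl}(c,c)\equiv t^{mn}B(m,n)/\Delta_{T(m,n)}$, with the numerator read in $\Z[t^{\pm1}]/(\Delta)$. Writing the pairing with values in $\Z[t^{\pm1}]/(\Delta)$ amounts to multiplying by $\Delta$. As a sanity check we will confirm that $B(m,n)$ is a Laurent polynomial: $(1-t^{mr})/(1-t^m)$ and $(1-t^{ns})/(1-t^n)$ are polynomials. We will also check that it is a unit modulo $\Delta$ up to the symmetry $\bar B\equiv B$, consistent with nonsingularity and Hermitian symmetry.

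The main obstacle is the chain-level duality step. We need the correct intersection/cap-product formula from the diagonal approximation, including the $t$-power normalization and the sign conventions, which account for the factors $t^{mn}$ and $t^{-(m+1)(n+1)/2}$. The algebraic simplification that reduces the resulting expression to $B(m,n)$ modulo $\Delta$ is also delicate. Our first attempt will compute the Fox-calculus cap product of the fundamental class (the taut identity) with the cochain dual to $c$. We will then simplify, using $t^{mn}\equiv$ relations in $\Z[t^{\pm1}]/(\Delta)$, namely that $(1-t^m)(1-t^n)$ divides $(1-t)(1-t^{mn})$ modulo $\Delta$, to reach the stated closed form. Because both sides are determined up to units and norms, small discrepancies in $t$-powers can be fixed by rechoosing the generator $c\mapsto t^k c$. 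Such a change leaves the isometry class unchanged, since $\overline{t^k}t^k=1$.
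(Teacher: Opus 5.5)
\textbf{Verdict: genuine gap.} Your overall plan is the paper's: compute the pairing directly from the definition on the rank-$(1,2,2,1)$ complex given by the taut identity. However, the duality step, which is where the real work happens, is set up in a way that does not go through, and the normalisation fallback is invalid.

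\textbf{The duality step.} The diagonal approximation gives $D^\sharp(h^{(3)}_1)\in C_1\otimes C_2$. That is a \emph{copairing}, namely the cap-product map $C^2\to C_1$ of Proposition~\ref{pro:cellX1}. It is not a pairing $C_2\times C_1\to\Lambda_\Z$. To evaluate $\langle\tilde c,c\rangle$ for a cycle $c$ fixed in advance, you need a $2$-cocycle $\omega$ with $\mathrm{PD}[\omega]=[c]$; this is what ``the cochain dual to $c$'' would have to mean. The cap-product matrix is not invertible over $\Lambda_\Z$ in general: its determinant is a monomial times $\frac{(1-t^{ns})(1-t^{mr})}{(1-t^{n})(1-t^{m})}$.

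Concretely, write $\Delta=\Delta_{T(m,n)}$. Then $\ker\partial_3^{\#\top}$ is generated by $\omega_0=\bigl(t^{mn},\,-\frac{1-t^{ns}}{1-t}\bigr)^{\top}$, and $\ker\partial_1$ by $k=\bigl(t^{1-m}\frac{1-t^{m}}{1-t},\,-t^{1-n}\frac{1-t^{n}}{1-t}\bigr)^{\top}$. Capping gives $\mathrm{PD}(\omega_0)=-t^{mn+m+n-1}\frac{(1-t^{mr})(1-t^{ns})}{(1-t^{m})(1-t^{n})}\,k$. So for the natural choice $c=k$ you would have to invert that element modulo $\Delta$, and there is no evident closed form for the inverse.

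The missing idea is to reverse the order, so that the explicit matrices are only ever applied forwards. The paper uses two facts: $\partial_2^{\#\top}\equiv0 \pmod{\Delta}$ (Proposition~\ref{pro:cellX2}), and $t^n-t^m$ is a unit modulo $\Delta$. Together these show that $\eta=(1-t,1-t)^{\top}$ generates $H^1(X_{T(m,n)};\Lambda_\Z/(\Delta))$ (Lemma~\ref{lem:ba1}). The paper then \emph{defines} the generator of $H_1$ to be $\mathrm{PD}\circ\beta(\eta)$, with $\beta=\frac1\Delta\partial_2^{\#\top}$ (Remark~\ref{rmk:bock}). The pairing becomes the single product $\frac1\Delta\,\eta^{\#\top}(\text{PD matrix})\,\partial_2^{\#\top}\eta$, with no Smith form and no inversion.

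In your homological language the same fix is to set $c:=\mathrm{PD}(\omega_0)$ rather than choosing $c$ first. Then $\tilde c$ is simply a multiple of the first basis vector of $C_2$, because the first column of $\partial_2$ equals $t^{-(m-1)(n-1)/2}\Delta\,k$.

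\textbf{The normalisation fallback.} Replacing $c$ by $t^kc$ leaves $\mathrm{Bl}(c,c)$ literally unchanged; that is exactly what $(t^k)^\#t^k=1$ says. So it cannot absorb any discrepancy. For a form on a cyclic module, the isometry class is the self-pairing modulo norms $uu^\#$ of units of $\Lambda_\Z/(\Delta)$, and that is what must be checked against $t^{mn}B(m,n)$.

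In $t^{mn}B(m,n)$, the factor $t^{mn}$ is harmless only because $\Delta$ divides $1-t^{mn}$. More importantly, $(t^m-t^n)^2=-t^{m+n}(t^m-t^n)(t^m-t^n)^\#$ is, up to $-t^{m+n}$, the norm of the unit $t^m-t^n$. This factor is an artifact of the generator $\eta$: one checks that $\beta(\eta)=t^{(m-1)(n-1)/2-mn}(t^n-t^m)\,\omega_0$. Any other generator, such as yours, yields a representative that matches $t^{mn}B(m,n)$ only up to such a norm. Your plan does not account for this.
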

\noindent
For the proof of Theorem \ref{thm:preBl}, see Section \ref{sec:prBl}.
We next present several results on the twisted Blanchfield pairings of $T(m,n)$.
For \(a\in\mathbb Z_{n}\setminus\{0\}\), let \(\mathcal{O}(a)\) denote the local ring of germs of holomorphic functions at \(e^{2\pi i\,a/n}\), i.e. the ring of convergent power series in $t - e^{2\pi i\,a/n}$.
Let \(\boldsymbol{b}=(b_{1},\dots,b_{m})\in\mathbb Z_{n}^{m}\) satisfy \(b_{1}+\cdots+b_{m}=0\), and let \(\rho(\bm{b}): \pi_1(X_{T(m,n)}) \to \mathrm{GL}_m (\mathcal{O} (a))\) be the corresponding Casson--Gordon type metabelian representation (see Section~\ref{sec:pfp} or \cite{MP, CKP} for the definition).
The following theorem determines the \( (t-e^{2\pi \sqrt{-1} \,a/n}) \)-primary part of the twisted Alexander module associated with the metabelian representation $\rho(\bm{b})$.
\begin{Theorem}\label{thm:pretAl}
Let \(\Theta_{\boldsymbol{b}}(a)\) be the matrix defined in \eqref{eq:Omega}.
For the representation \(\rho(\bm{b})\), the twisted Alexander module $H_{1}\bigl(X_{T(m,n)};\,\mathcal{O}(a)^{m}\bigr)$ is isomorphic to 
$
\mathcal{O}(a)^{m}\big/\Theta_{\boldsymbol{b}}(a)\,\mathcal{O}(a)^{m}.
$
\end{Theorem}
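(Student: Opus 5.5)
We will compute $H_1$ from the small cellular chain complex furnished by the taut identity of Theorem~\ref{thm:tiden}. The genus-two Heegaard splitting of $X_{T(m,n)}$ gives a CW structure with one $0$-cell, two $1$-cells $x,y$, two $2$-cells (the relators), and one $3$-cell (the identity). For $T(m,n)$ these relators can be taken from the presentation $\langle x,y \mid x^m=y^n\rangle$ together with the relator coming from the $0$-surgery, namely the longitude $\lambda=x^{m}\,(x^{r}y^{s})^{-mn}$ or a suitable variant. Tensoring the equivariant chain complex $C_*(\widetilde X)$ with $\mathcal{O}(a)^m$ via $\rho(\bm b)$ gives
\[
0\to \mathcal{O}(a)^m \xrightarrow{\partial_3} \mathcal{O}(a)^{2m}\xrightarrow{\partial_2}\mathcal{O}(a)^{2m}\xrightarrow{\partial_1}\mathcal{O}(a)^m\to 0 .
\]
Here $\partial_1=(\rho(x)-I,\ \rho(y)-I)^T$ and $\partial_2$ is the matrix of Fox derivatives evaluated under $\rho(\bm b)$. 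The $3$-cell boundary $\partial_3$ is read off from the taut identity.

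The first step is to write $\rho(\bm b)$ explicitly. The meridian $\mu=x^{r}y^{s}$ maps to $t$ times a cyclic permutation matrix, and the elements $x^m=y^n$ are central. In a Casson--Gordon type metabelian representation, $\rho(x)$ and $\rho(y)$ are then products of powers of $t$, permutation matrices, and diagonal matrices with entries $\zeta^{b_i}$, where $\zeta=e^{2\pi i/n}$.

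The second step is to use that we work over the local ring $\mathcal{O}(a)$, which is a PID (indeed a DVR). Any entry or minor that does not vanish at $t=e^{2\pi i a/n}$ is a unit there, and we exploit this to simplify. I expect $\partial_1$ to contain an invertible block: $\rho(y)-I$ or $\rho(x)-I$ should be invertible at $\xi$ for generic choices. This invertibility lets us cancel $\mathcal{O}(a)^m\to\mathcal{O}(a)^m$ against one $1$-cell block by a change of basis, so $H_1\cong \ker\partial_1/\operatorname{im}\partial_2$ becomes a cokernel. In the same way, the $3$-cell boundary should be injective with an invertible block at $\xi$, which cancels one $2$-cell block. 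After these two Gaussian-elimination cancellations, $H_1$ is presented by a single $m\times m$ matrix over $\mathcal{O}(a)$. Simplifying its entries with the relation $x^m=y^n$ should produce $\Theta_{\bm b}(a)$ as defined in \eqref{eq:Omega}. Since $C_*$ is a free resolution over the local PID, the homology is exactly this cokernel.

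The main obstacle is the invertibility claims. Which of $\rho(x)-I$ and $\rho(y)-I$ is a unit depends on $\bm b$ and $a$, and eigenvalues $\zeta^{b_i}t^{\cdot}$ can equal $1$ at $t=\xi$. I would first try to choose the generator for which this cancellation works using the eigenvalue description. If that fails for some $\bm b$, I would instead compute Smith normal forms block by block, using the centrality of $x^m$ to diagonalize simultaneously. Along the way we must verify that the Fox-derivative block from the longitude relator, after the cancellations, matches \eqref{eq:Omega}, which is a direct but bookkeeping-heavy computation.
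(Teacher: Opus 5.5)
Your plan has a genuine gap at exactly the step you flag, and the failing case is not a side case.

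\textbf{Where the elimination fails.} The block $I-X^{-1}$ of $\partial_1$ (equivalently $\rho(x)-I$) is never invertible over $\mathcal{O}(a)$. Its determinant is $1-t^{-n}$, which vanishes at $\xi=e^{2\pi\sqrt{-1}a/n}$ for every $a$, because $\xi^n=1$. The block $I-Y^{-1}$ has determinant $\prod_i(1-e^{-2\pi\sqrt{-1}b_i/n}t^{-1})$. This is a unit in $\mathcal{O}(a)$ if and only if $a\notin\{-b_1,\dots,-b_m\}$, i.e.\ if and only if $P=I$.

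In that case your elimination works and is short:
\begin{enumerate}
\item[(1)] The kernel of $\partial_1$ has basis $\begin{pmatrix} I\\ -(I-Y^{-1})^{-1}(I-X^{-1})\end{pmatrix}$.
\item[(2)] The $3$-cell cancels against the block $t^{-n}(I-M^{-1})$. This block is invertible because $\det(I-M)=1-t$ and $a\neq 0$.
\item[(3)] Then $\Ima\partial_2$ is spanned by the first block column of $\partial_2$. Comparing top blocks gives the presentation matrix $t^{-n}\frac{I-X^m}{I-X}X=\Theta_{\bm b}(a)$.
\end{enumerate}
But the theorem is asserted for every $a\neq 0$. The case where some $b_i=-a$ is precisely what the projection $P$ in \eqref{eq:Omega} is built for. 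There, neither block of $\partial_1$ is invertible, so there is nothing to cancel $C_0$ against.

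Your fallback does not rescue this. The element $X^m=t^nI$ is scalar, so its centrality carries no information. Nor can $X$ and $Y$ be simultaneously diagonalized. $X$ is monomial along a single $m$-cycle, while $Y$ is diagonal with non-constant entries (since $\bm b\neq 0$, $\sum b_i=0$, $\gcd(m,n)=1$). Hence $XY\neq YX$.

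\textbf{What is missing.} You need an explicit basis of $\Ker\partial_1$ in the mixed case. The paper obtains it by splitting coordinates with $P$ and using two invertible matrices, $V=(I-PY^{-1})^{-1}$ and $W=(I-(I-P)X^{-1})^{-1}$. The second is invertible because $\det(I-(I-P)X^{-1})=1$: $X^{-1}$ permutes coordinates along one $m$-cycle, and $P\neq 0$ (if all $b_i=-a$, then $ma=0$ in $\Z_n$, forcing $a=0$).

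After the change of basis $E$, Lemma~\ref{lem:Edel} and Proposition~\ref{prop:kernel} give a kernel basis with two kinds of columns:
\begin{enumerate}
\item[(1)] On the $P$-columns it is $\begin{pmatrix} W\\ -V(I-X^{-1})W\end{pmatrix}P$.
\item[(2)] On the $(I-P)$-columns it is the first block column of $\partial_2$ itself. This is legitimate because $\frac{1-t^n}{1-e^{-2\pi\sqrt{-1}a/n}t}$ is a unit in $\mathcal{O}(a)$ (Lemma~\ref{lem:propmat}(iv)).
\end{enumerate}
One then checks that the image basis equals this kernel basis times $\Theta_{\bm b}(a)$. As written, your argument proves the theorem only for $a\notin\{-b_1,\dots,-b_m\}$.

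A smaller point: with the paper's normalization $mr+ns=1$ and $x\mapsto t^n$, $y\mapsto t^m$, the meridian is $x^sy^r$, not $x^ry^s$. The latter does not map to $t$, so your Fox-derivative bookkeeping would start from the wrong relator.
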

\noindent
For the representation \( \rho (\bm{b}) \), one can define an $\mathcal{O}(a)$-valued Blanchfield pairing on $H_{1}\bigl(X_{T(m,n)};\,\mathcal{O}(a)^{m}\bigr)$.
From the $\mathcal{O}(a)$-valued Blanchfield pairing, we can recover the restriction of the original twisted Blanchfield pairing to the $(t - e^{2 \pi \sqrt{-1} a / n} )$-primary part of $H_{1}\bigl(X_{T(m,n)};\, \C[t^{\pm 1}]^{m}\bigr)$ \cite{CKP}.
Under the assumption $a \in \Z_n \setminus \{0, -b_1, -b_2, \ldots, -b_m\}$, we explicitly determine the $\mathcal{O}(a)$-valued twisted Blanchfield pairing for $T(m,n)$ and $\rho(\bm{b})$ as follows:
\begin{Theorem}\label{thm:SptBL}
Suppose that $a \in \Z_n \setminus \{0, -b_1, -b_2, \ldots, -b_m\}$.
Let $\Theta_{\bm{b}}(a)$ be the matrix defined in \eqref{eq:Omega}, and let $\Psi_{\bm{b}}(a)$ be the matrix defined in \eqref{eq:Psi}.
The $\mathcal{O}(a)$-valued twisted Blanchfield pairing for $T(m,n)$ and \(\rho(\bm{b})\) is isometric to the following sesquilinear form:
\begin{equation*}
\begin{aligned}
\mathcal{O}(a)^m/\Theta_{\bm{b}}(a) \, \mathcal{O}(a)^m
\times
\mathcal{O}(a)^m/\Theta_{\bm{b}}(a) \, \mathcal{O}(a)^m
&\longrightarrow
\mathcal{O}(a) \big/ \bigl( t^{n(m-1)/2} (1 - t^n)^{m-1} \delta_m(t) \bigr); \\
(\bm{f}(t), \bm{g}(t))
&\longmapsto
\bm{f}(t^{-1})^{\top} \, \bigl( t^{n} \Psi_{\bm{b}}(a) \bigr) \, \bm{g}(t) .
\end{aligned}
\end{equation*}
Here, $\delta_m (t)$ is the rational function defined in \eqref{eq:delta}.
\end{Theorem}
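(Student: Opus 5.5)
The plan is to compute the twisted Blanchfield pairing directly from its definition, using the small chain complex furnished by the taut identity of Theorem~\ref{thm:tiden}. That identity realizes a genus-two Heegaard splitting of $X_{T(m,n)}$ and gives a cellular structure with one $0$-cell, two $1$-cells $x,y$ (with $x^m=y^n$ up to the $0$-surgery relation), two $2$-cells and one $3$-cell, together with a diagonal approximation. Tensoring the lifted complex with $\mathcal{O}(a)^m$ via $\rho(\bm{b})$ gives
\[
0\to \mathcal{O}(a)^m \xrightarrow{\partial_3} \mathcal{O}(a)^{2m}\xrightarrow{\partial_2}\mathcal{O}(a)^{2m}\xrightarrow{\partial_1}\mathcal{O}(a)^m\to 0 ,
\]
whose boundary maps are block matrices of Fox derivatives evaluated at $\rho(\bm{b})$. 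Here $\rho(\bm b)(x)$ is a cyclic permutation matrix twisted by $t$, and $\rho(\bm b)(y)$ is diagonal with entries $\zeta^{b_i}$-type factors.

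First I would reuse the computation behind Theorem~\ref{thm:pretAl}. The hypothesis $a\neq 0,-b_i$ makes the diagonal blocks $\rho(y)-1$ (or $1-\zeta^{b_i}t^{\cdot}$) invertible over the local ring $\mathcal{O}(a)$. Row and column operations then reduce $\partial_2$ modulo image and kernel issues to the single $m\times m$ block $\Theta_{\bm b}(a)$. So $H_1\cong \mathcal{O}(a)^m/\Theta_{\bm b}(a)\mathcal{O}(a)^m$, with explicit change-of-basis matrices $P,Q$ recorded. I would also verify that $\det\Theta_{\bm b}(a)$ equals, up to units, $t^{n(m-1)/2}(1-t^n)^{m-1}\delta_m(t)$, which is why the form takes values in that quotient.

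Next I would follow the standard recipe for the Blanchfield form (as in \cite{MP, CKP}):
\[
\mathrm{Bl}([u],[v])=\tfrac1{s}\,\overline{\langle \tilde u, v\rangle},
\]
where $u=\partial_2 w$ up to $s$, so that $s\cdot u=\partial_2 \tilde u$, and the Kronecker/intersection pairing is realized by Poincaré duality through the diagonal approximation. Concretely, the duality chain map $C^{3-*}\to C_*$ on $1$-chains versus $2$-chains is read off from the taut identity. Then:
\begin{itemize}
\item for a generator $e_j$ of $\mathcal{O}(a)^m/\Theta$, lift $e_j$ to a $1$-cycle;
\item solve $\Theta\,\bm z=\det(\Theta)\,e_j$ using the adjugate;
\item pair with $e_k$ via the duality matrix.
\end{itemize}
This gives a matrix $\det(\Theta)^{-1}D\,\mathrm{adj}(\Theta)$ with entries in $\mathcal{O}(a)$ modulo $\det\Theta$. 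Combining it with the unit factors coming from $P,Q$ and from $t^{n}$ (produced by the basepoint/cell-orientation conventions) I would identify it with $t^n\Psi_{\bm b}(a)$. Finally I would check that this form is well defined on the quotient, i.e.\ $\Theta^{*}\Psi\in(\det)$-multiples, and confirm hermitian symmetry as a consistency test.

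The main obstacle is the duality step. It requires writing the equivariant cap product with the fundamental class explicitly from the diagonal approximation of the taut identity, twisted by $\rho(\bm b)$, and keeping track of the involution $t\mapsto t^{-1}$ together with transposition and conjugation of $\zeta$. It also requires handling that the reduction to $\Theta$ used invertibility of the blocks $\rho(y)-1$ only locally. I would first carry this out for the untwisted case as a check against Theorem~\ref{thm:preBl} (where $B(m,n)$ should emerge the same way), and then run the identical bookkeeping with matrix coefficients.
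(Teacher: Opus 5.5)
Your strategy is viable, and it is essentially the homological mirror of the paper's argument.

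\textbf{How your route compares with the paper's.} The paper works on the cochain side:
\begin{itemize}
\item it takes the basis $N=\begin{pmatrix} I\\ -t^{-n}(I-M)^{-1}(I-X^{s})\end{pmatrix}$ of $\Ker\partial_3^{\#\top}$ (Proposition~\ref{prop:twiH2});
\item it solves $\partial_2^{\#\top}\xi=\Delta N$ with $\xi$ built from $\mathrm{adj}\bigl(\frac{I-X^m}{I-X}\bigr)$ (Lemma~\ref{lem:ba2} and Remark~\ref{rmk:adj});
\item it takes as generators of $H_1$ the columns of $L=\mathcal{P}N$, where $\mathcal{P}$ is the duality matrix of Proposition~\ref{prop:twicell} (Lemma~\ref{lem:twi1}).
\end{itemize}
In these bases the Bockstein and Poincar\'e duality maps are identity matrices, so the Gram matrix is the single product $\xi^{\#\top}L=t^n\Psi_{\bm b}(a)$.

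You instead stay in the basis $K$ of $\Ker\partial_1$ from Proposition~\ref{prop:kernel}. Here $P=I$, so $K=\begin{pmatrix} I\\ -(I-Y^{-1})^{-1}(I-X^{-1})\end{pmatrix}$. The identity $\partial_2\begin{pmatrix} I\\ 0\end{pmatrix}=K\,\Theta_{\bm b}(a)$ makes the bounding $2$-chain $\begin{pmatrix}\mathrm{adj}(\Theta_{\bm b}(a))e_j\\ 0\end{pmatrix}$ immediate.

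The duality step you flag as the main obstacle is lighter than you expect:
\begin{itemize}
\item If $\partial_2 z=\Delta y$ and $\mathcal{P}\eta=x$, the definition unwinds to $\mathrm{Bl}(x,y)=\eta^{\#\top}z$ modulo $\Delta$.
\item So you only need $\mathcal{P}^{-1}$ on cycles. You do not need a $C_2\otimes C_1$ component of the diagonal approximation.
\item $\mathcal{P}$ is invertible over $\mathcal{O}(a)$. It is block upper triangular with diagonal blocks $-X\frac{I-X^s}{I-X}$ and $-t^n\frac{I-Y^r}{I-Y}Y$, and these are invertible because $\gcd(s,m)=\gcd(r,n)=1$ and $a\neq -b_i$.
\end{itemize}

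\textbf{The gap: the final identification with $t^n\Psi_{\bm b}(a)$.} You expect to reach $t^n\Psi_{\bm b}(a)$ by bookkeeping of unit factors, and that cannot work as stated. Since the top block of $K$ is $I$, the paper's generators satisfy $L=KG$ with
\[
G=X\,\frac{I-X^s}{I-X}\,Y^{r}(I-M)^{-1}(I-Y^{-r})\in \mathrm{GL}_m(\mathcal{O}(a)).
\]
Your recipe in the basis $K$ therefore produces the Gram matrix
\[
\frac{\Delta}{\det\Theta_{\bm b}(a)}\,(G^{-1})^{\#\top}\,\mathrm{adj}\,\Theta_{\bm b}(a)\;\equiv\;(G^{-1})^{\#\top}\,t^n\Psi_{\bm b}(a)\,G^{-1}\pmod{\Delta}.
\]
This differs from $t^n\Psi_{\bm b}(a)$ by the congruence $G$, which is a genuine matrix and not one of the scalar units you list. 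Indeed, the factors $(I-M^{-1})^{-1}(I-Y^{-r})$ in $\Psi_{\bm b}(a)$ come from the choice of generators $L$, and your matrix contains their inverses instead.

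To conclude, you must show that $G$ induces an automorphism of $\mathcal{O}(a)^m/\Theta_{\bm b}(a)\mathcal{O}(a)^m$, i.e.\ $G\,\Theta_{\bm b}(a)\mathcal{O}(a)^m=\Theta_{\bm b}(a)\mathcal{O}(a)^m$. This is true, but it needs an argument. Put $\zeta=e^{2\pi\sqrt{-1}a/n}$.
\begin{itemize}
\item $X(\zeta)$ has the $m$ distinct $m$-th roots of unity as eigenvalues.
\item Hence $\Theta_{\bm b}(a)$ has Smith form $\mathrm{diag}(1,t-\zeta,\dots,t-\zeta)$.
\item So $\Theta_{\bm b}(a)\mathcal{O}(a)^m$ consists of the vectors whose value at $\zeta$ lies in the line $\ell=\ker(I-X(\zeta))$.
\item For $v\in\ell$ one has $X^sv=v$. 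Hence $(I-M)Y^{-r}v=-(I-Y^{-r})v$, and therefore $G(\zeta)v=-s\,v$.
\end{itemize}

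You need this fact whichever way you proceed. You can transport your Gram matrix by $G$. Or you can adopt the duality-adapted generators $L$ from the start, as the paper does, which reduces the computation to the single product $\xi^{\#\top}L$. In both cases the identification with the stated form on $\mathcal{O}(a)^m/\Theta_{\bm b}(a)\mathcal{O}(a)^m$ rests on this fact. The paper itself uses it only implicitly.
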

\noindent
See Sections~\ref{sec:pretAl} and \ref{sec:SptBL} for the proof of Theorems~\ref{thm:pretAl} and \ref{thm:SptBL}, respectively.

This paper is organized as follows.
In Section \ref{sec:prelim}, we review twisted (co)homology, the Blanchfield pairing, and the twisted Blanchfield pairing.
In Section \ref{sec:cell}, we review identities among relations for oriented closed $3$-manifolds.
In particular, in Section \ref{sec:cell}, we establish the identity of the closed $3$-manifold $X_{T(m,n)}$ obtained by $0$-surgery along the torus knot $T(m,n)$ in $S^3$.
We prove Theorem~\ref{thm:preBl} in Section \ref{sec:prBl}, and we prove Theorems~\ref{thm:pretAl} and \ref{thm:SptBL} in Section \ref{sec:2THM}.

\noindent
\medskip
\textbf{Conventional terminology.}
Throughout this paper, \(K\) denotes a knot in the \(3\)-sphere \(S^{3}\).
Let \(X_{K}\) denote the closed \(3\)-manifold obtained from \(S^{3}\) by \(0\)-surgery on \(K\).
For \(R\in\{\Z, \C\}\), set \(\Lambda_{R}\coloneqq R[t^{\pm1}]\).
Define an involution
\[
(\,\cdot\,)^{\#}:\Lambda_{R}\longrightarrow \Lambda_{R};\qquad
f(t)^{\#}\coloneqq \overline{f(t^{-1})}\quad\text{for }f(t)\in\Lambda_{R},
\]
where \(\overline{(\,\cdot\,)}\) denotes complex conjugation.
For a matrix \(A\), we write \(A^{\top}\) for its transpose.

\subsection*{Acknowledgments}
The author would like to express sincere gratitude to Takefumi Nosaka for his continued support and helpful suggestions. 
The author is also grateful to Anthony Conway and Mark Powell for their valuable comments and suggestions.

\section{Preliminaries}\label{sec:prelim}
In this section, we begin by recalling the twisted (co)homology groups of $X_K$ in Section \ref{sec:twihom}. 
We then recall the definitions of Blanchfield pairings and twisted Blanchfield pairings in Sections~\ref{sec:Bla} and \ref{sec:twiBla}, respectively. 
No new results are proved in this section.

\subsection{Twisted homology and cohomology groups}\label{sec:twihom}
In this subsection, we review twisted (co)homology groups following \cite{Bro}. 
We denote by $\widetilde{X}_K$ the universal covering space of $X_K$.
Let \(M\) denote a left \(\mathbb Z[\pi_{1}(X_{K})]\)-module. 
We endow \(M\) with a right \(\mathbb Z[\pi_{1}(X_{K})]\)-action by
$m\cdot g \coloneqq g^{-1}m$ for $m\in M$ and $g\in \pi_{1}(X_{K})$,
and denote $M$ equipped with this right action by $M^{\mathrm{op}}$.

Since $\pi_1(X_K)$ acts on the chain complex $C_* (\widetilde{X}_K)$ via the deck transformation $\pi_1(X_K) \curvearrowright \widetilde{X}_K$, we may consider $C_* (\widetilde{X}_K)$ as a left $\Z[\pi_1(X_K)]$-module.
Hence we can consider the following chain complex:
\begin{equation*}
C_*(X_K; M) \coloneqq C_* ( \widetilde{X}_K)^{\mathrm{op}} \otimes_{\Z[\pi_1(X_K)]} M.
\end{equation*}
The twisted homology $H_*(X_K;M)$ is defined to be the homology of this chain complex.
Similarly, the twisted cohomology $H^*(X_K;M)$ is the cohomology of the following cochain complex:
\begin{equation*}
C^*(X_K; M) \coloneqq \Hom_{\Z[\pi_1(X_K)]} ( C_* (\widetilde{X}_K), M ).
\end{equation*}
Let $M'$ be a right $\Z[\pi_1(X_K)]$-module.
For a cycle $c \otimes m' \in C_d (X_K; M')$,
the evaluation map $\mathrm{ev}([\omega])$ for a cocycle $\omega$ with cohomology class $[\omega] \in H^d(X_K;M)$ is defined by 
\begin{equation*}
\mathrm{ev}([\omega]) :  H_d ( X_K; M') \longrightarrow M^{\mathrm{op}} \otimes_{\Z[\pi_1(X_K)]} M' ; \qquad
[c \otimes m'] \longmapsto \omega(c) \otimes m'.
\end{equation*}
Moreover, given a diagonal approximation
\begin{equation}\label{eq:diagonal}
D^\sharp : C_3( \widetilde{X}_K ) \longrightarrow C_1( \widetilde{X}_K ) \otimes C_2( \widetilde{X}_K )
\end{equation}
(for the definition, see \cite[p.108]{Bro}),
we can define the cap product by
\begin{equation*}
\begin{aligned}
\frown \ :  H^2(X_K ; M) \otimes H_3(X_K ; \Z) &\longrightarrow H_1(X_K; M);\\
[\omega] \otimes [c \otimes \ell] &\longmapsto [\ell (\mathrm{id}_{C_1( \widetilde{X}_K)} \otimes \omega)( D^\sharp(c) )].
\end{aligned}
\end{equation*}
Here, we regard $\Z$ as the trivial $\Z[\pi_1(X_K)]$-module, and we identify
$\Z \otimes M$ with $M$.
Recall that the Poincar\'e duality map $\mathrm{PD} : H^2( X_K ; M) \xrightarrow{\sim} H_1(X_K; M)$ is defined to be the cap product with the fundamental class of $X_K$.

\subsection{Review of Blanchfield pairings}\label{sec:Bla}
In this subsection, we review Blanchfield pairings.
It is well known (see \cite{Lic}) that $H_1(X_K;\Z) \cong \Z$.
Hence we may consider $H_1(X_K; \Z)$ as the multiplicative group $\langle t \mid \ \rangle$ generated by $t$.
Then $\Lambda_\Z = \Z[t^{\pm 1}]$ becomes a left $\Z[\pi_1(X_K)]$-module via the abelianization map $\pi_1(X_K) \to \langle t \mid \ \rangle$.
We define $\Delta_K \in \Lambda_\Z$ to be a representative of the order of $H_1(X_K; \Lambda_\Z)$ that is invariant under the involution $\#$, i.e.\ $\Delta_K^\# = \Delta_K$.
See \cite[p.5]{FV} for the definition of the order of a module over a UFD. 

Consider the exact sequence
\begin{equation*}
0 \longrightarrow \Lambda_\Z \xrightarrow{\,\,\Delta_K \cdot\,\, } \Lambda_\Z \longrightarrow \Lambda_\Z/(\Delta_K) \longrightarrow 0,
\end{equation*}
which induces a long exact sequence
\begin{equation*}
\begin{aligned}
\cdots &\rightarrow
H^1( X_K; \Lambda_\Z)
\xrightarrow{\Delta_K \cdot}
H^1 (X_K; \Lambda_\Z)
\rightarrow \\
\rightarrow
H^1 (X_K; \Lambda_\Z/(\Delta_K))
&\xrightarrow{\beta}
H^2( X_K; \Lambda_\Z)
\xrightarrow{\Delta_K \cdot}
H^2 (X_K; \Lambda_\Z)
\rightarrow \cdots.
\end{aligned}
\end{equation*}
The connecting homomorphism $\beta$ is usually called the Bockstein homomorphism.
We now show that the Bockstein homomorphism is an isomorphism.
By Poincar\'e duality we have \(H^{2}(X_{K};\Lambda_{\Z})\cong H_{1}(X_{K};\Lambda_{\Z})\).
Since \(\Delta_{K}\) is a representative of the order of \(H_{1}(X_{K};\Lambda_{\Z})\), multiplication by \(\Delta_{K}\) on \(H^{2}(X_{K};\Lambda_{\Z})\) is the zero map.
On the other hand, Poincar\'e duality identifies \(H^{1}(X_{K};\Lambda_{\mathbb Z})\) with \(H_{2}(X_{K};\Lambda_{\mathbb Z})\cong \Lambda_{\mathbb Z}/(1-t)\).
The image of $\Delta_K$ in $\Lambda_\Z/(1-t)$ is a unit since $\Delta_K(1)=\pm 1$ (see \cite{Lic}), and hence
$
\Delta_{K}\,\cdot : H^{1}(X_{K};\Lambda_{\mathbb Z}) \xrightarrow{} H^{1}(X_{K};\Lambda_{\mathbb Z})
$
is an isomorphism.
Therefore, by exactness of the long exact sequence, $\beta$ is an isomorphism.

Using the Poincar\'e duality map $\mathrm{PD}$ and the Bockstein homomorphism $\beta$,
we define a sesquilinear form
\begin{equation}\label{eq:Bl}
\begin{aligned}
\mathrm{Bl}_K: H_1( X_K; \Lambda_\Z) \times H_1( X_K; \Lambda_\Z) &\longrightarrow \Lambda_\Z/(\Delta_K);\\
(x,y) &\longmapsto
\Phi_\Z\Bigl( \mathrm{ev}\bigl((\mathrm{PD} \circ \beta)^{-1}(x)\bigr)(y) \Bigr).
\end{aligned}
\end{equation}
Here, $\Phi_\Z : (\Lambda_\Z/(\Delta_K))^{\mathrm{op}} \otimes \Lambda_\Z \to \Lambda_\Z/(\Delta_K)$ is defined by $\Phi_\Z (f \otimes g ) \coloneqq f^\# g$.
The sesquilinear form $\mathrm{Bl}_K$ is called the Blanchfield pairing. 
It is known that $\mathrm{Bl}_K$ is a nonsingular, sesquilinear, Hermitian form  \cite{Bla}.
Moreover, there are procedures for computing $\mathrm{Bl}_K$ from Seifert matrices or Wirtinger presentations of $K$ \cite{Kea,FP,MP}.
However, it is difficult to explicitly compute the Blanchfield pairing of the torus knot $T(m,n)$ from these procedures since the Seifert genus and the crossing number of $T(m,n)$ are both bounded below by $(m-1)(n-1)/2$.

\begin{Remark}\label{rmk:anoBl}
The Blanchfield pairing can also be described in the following two ways (see, e.g., \cite{Bla, FP, Nos2}).

Let $Q(\Lambda_\Z)$ be the quotient field of $\Lambda_\Z$. 
From the short exact sequence
\begin{equation*}
0 \longrightarrow \Lambda_\Z \xrightarrow{\text{inclusion}} Q(\Lambda_\Z) \longrightarrow Q(\Lambda_\Z)/\Lambda_\Z \longrightarrow 0,
\end{equation*}
we obtain a Bockstein homomorphism $H^1(X_K; Q(\Lambda_\Z)/\Lambda_\Z ) \to H^2(X_K; \Lambda_\Z )$.
This Bockstein homomorphism is also an isomorphism.
Thus, we similarly obtain a sesquilinear form $\widetilde{\mathrm{Bl}}_K : H_1( X_K; \Lambda_\Z) \times H_1( X_K; \Lambda_\Z) \to Q(\Lambda_\Z)/\Lambda_\Z$.
We can check that $\widetilde{\mathrm{Bl}}_K$ corresponds to the composition of $\mathrm{Bl}_K$ with the homomorphism 
\[
\Lambda_\Z/(\Delta_K) \longrightarrow Q(\Lambda_\Z)/\Lambda_\Z;\qquad f \longmapsto f/\Delta_K.
\]

Alternatively, considering the abelianization $\pi_1( S^3 \setminus K) \to \langle t \mid \rangle$, 
we can similarly define a sesquilinear form $H_1(  S^3 \setminus K; \Lambda_\Z) \times H_1(  S^3 \setminus K; \Lambda_\Z) \to \Lambda_\Z/(\Delta_K)$.
From the definitions, one can check that this sesquilinear form coincides with $\mathrm{Bl}_K$ via the isomorphism $H_1(  S^3 \setminus K; \Lambda_\Z) \to H_1(  X_K; \Lambda_\Z)$ induced by the inclusion map $ S^3 \setminus K \subset X_K$.

Consequently, since both of the above sesquilinear forms are isometric to $\mathrm{Bl}_K$, it is enough to consider $\mathrm{Bl}_K$ in \eqref{eq:Bl}.
\end{Remark}

\subsection{Review of twisted Blanchfield pairings}\label{sec:twiBla}
We recall the definition of the twisted Blanchfield pairing in this subsection.
Fix an integer $\ell \ge 1$.
Let $\rho \colon \pi_1(X_K) \to \mathrm{GL}_\ell (\Lambda_\C)$ be a nontrivial representation satisfying the following three conditions:
\begin{enumerate}
\item[(I)] $\rho(g^{-1}) = \rho(g)^{\# \, \top}$ for any $g \in \pi_1(X_K)$.
\item[(II)] $H_i(X_K; Q(\Lambda_\C)^\ell) = 0$ for every $i \in \Z$, where $Q(\Lambda_\C)$ denotes the field of fractions of $\Lambda_\C$.
\item[(III)] $H^1(X_K; \Lambda_\C^\ell)=0$.
\end{enumerate}
From the condition~(II), $H_1(X_K; \Lambda_\C^\ell)$ is a torsion $\Lambda_\C$-module.
Moreover, condition~(I) implies that its order is invariant under the involution $\#$ (see \cite{FV}).
Thus we can choose an element $\Delta_K^\rho \in \Lambda_\C$ as a representative of the order of $H_1(X_K; \Lambda_\C^\ell)$ such that $(\Delta_K^\rho)^\# = \Delta_K^\rho$.

Consider the short exact sequence
\begin{equation*}
0 \longrightarrow \Lambda_\C^\ell \xrightarrow{\,\,\Delta^\rho_K \cdot\,\, } \Lambda_\C^\ell \longrightarrow (\Lambda_\C/(\Delta^\rho_K))^\ell \longrightarrow 0
\end{equation*}
which induces a long exact sequence
\begin{equation*}
\begin{aligned}
\cdots &\rightarrow
H^1( X_K; \Lambda_\C^\ell)
\xrightarrow{\Delta^\rho_K \cdot}
H^1 (X_K; \Lambda_\C^\ell)
\rightarrow \\
\rightarrow
H^1 (X_K; (\Lambda_\C/(\Delta^\rho_K))^\ell )
&\xrightarrow{\beta}
H^2( X_K; \Lambda_\C^\ell)
\xrightarrow{\Delta^\rho_K \cdot}
H^2 (X_K; \Lambda_\C^\ell)
\rightarrow \cdots .
\end{aligned}
\end{equation*}
From the assumptions on $\rho$ and the definition of $\Delta^\rho_K$, the Bockstein homomorphism $\beta$ is an isomorphism.

We define the twisted Blanchfield pairing $\mathrm{Bl}_K^{\rho}$ by 
\begin{equation}\label{eq:twiBL}
\begin{aligned}
\mathrm{Bl}_K^{\rho}: H_1( X_K; \Lambda_\C^\ell) \times H_1( X_K; \Lambda_\C^\ell) &\longrightarrow \Lambda_\C/(\Delta^\rho_K);\\
(x,y) &\longmapsto
\Phi_\C \bigl( \mathrm{ev}\bigl((\mathrm{PD} \circ \beta)^{-1}(x)\bigr)(y) \bigr).
\end{aligned}
\end{equation}
Here, $\Phi_\C : ((\Lambda_\C/(\Delta^\rho_K) ) ^\ell)^{\mathrm{op}} \otimes \Lambda_\C^\ell \to \Lambda_\C/(\Delta^\rho_K)$ is defined by $\Phi_\C ( \bm{f} \otimes \bm{g} ) \coloneqq \bm{f}^{\# \top} \bm{g}$ for $\bm{f} \in (\Lambda_\C/(\Delta^\rho_K) )^\ell$ and $\bm{g} \in \Lambda_\C^\ell$.
The twisted Blanchfield pairing $\mathrm{Bl}^\rho_K$ is a nonsingular, sesquilinear, Hermitian form with respect to $\#$ (see \cite{Pow} and \cite[Proposition~5.3]{MP}).

Since the above discussion remains valid if we replace the coefficients $\Lambda_\C$ by the local ring $\mathcal{O}_\xi$ of germs of holomorphic functions at a root of unity $\xi$, we obtain the following sesquilinear form:
\begin{equation}\label{eq:OBl}
H_1( X_K; \mathcal{O}_\xi^\ell) \times H_1( X_K; \mathcal{O}_\xi^\ell) \longrightarrow \mathcal{O}_\xi/(\Delta^\rho_K).
\end{equation}
This sesquilinear form recovers $\mathrm{Bl}^\rho_K$ restricted to the $(t - \xi)$-primary part of $H_1(X_K; \Lambda_\C^\ell)$ (see \cite[Proposition~3.4]{BCP1}).
In this paper, with a slight abuse of notation, we also denote the sesquilinear form \eqref{eq:OBl} by $\mathrm{Bl}_K^\rho$.
\begin{Remark}
Let $Q(\Lambda_\C)$ denote the quotient field of $\Lambda_\C$.
In general, the twisted Blanchfield pairing is defined as the sesquilinear form
$\widetilde{\mathrm{Bl}}_K^{\rho}: 
H_1( X_K; \Lambda_\C^\ell) \times H_1( X_K; \Lambda_\C^\ell) \longrightarrow Q(\Lambda_\C)/\Lambda_\C$ similarly to Remark~\ref{rmk:anoBl} (for detailed definition, see \cite[Section~4.4]{MP} and \cite[Section 3.1]{BCP3}).
One can check that $\widetilde{\mathrm{Bl}}_K^{\rho}$ is isometric to the pairing obtained by composing $\mathrm{Bl}_K^{\rho}$ with the homomorphism
\[
\Lambda_\C/(\Delta_K^\rho) \longrightarrow Q(\Lambda_\C)/\Lambda_\C;\qquad f \longmapsto f/\Delta_K^\rho.
\]
In this paper, we adopt \eqref{eq:twiBL} as the definition of the twisted Blanchfield pairing for computational purposes.
\end{Remark}
\begin{Remark}\label{rmk:bock}
In the definition of the (twisted) Blanchfield pairing, we use the Bockstein homomorphism $\beta$, which can be realized at the cochain level as follows.
Let $R \in \{\Z,\C\}$, and let $M$ be a free $\Lambda_R$-module.
For $\Delta \in \Lambda_R$, consider the short exact sequence
\[
0 \xrightarrow{} M \xrightarrow{\Delta\cdot} M \xrightarrow{\mathrm{proj}} M/ \Delta M \xrightarrow{} 0.
\]
This short exact sequence induces a long exact sequence
\begin{equation*}
\begin{aligned}
\cdots &\rightarrow
H^1( X_K; M)
\xrightarrow{\Delta \cdot} 
H^1 (X_K; M)
\rightarrow \\
\rightarrow
H^1 (X_K; M/ \Delta M)
&\xrightarrow{\beta}
H^2( X_K; M)
\xrightarrow{\Delta \cdot}
H^2 (X_K; M)
\rightarrow \cdots .
\end{aligned}
\end{equation*}
The connecting homomorphism $\beta$ in this long exact sequence is the Bockstein homomorphism.
Suppose that we have a splitting $\mathfrak{s}\colon M/\Delta M \to M$ of $\mathrm{proj}: M \twoheadrightarrow M/\Delta M$.
Let \(\mathfrak{s}^*:C^{1}(X_{K}; M/\Delta M)\to C^{1}(X_{K}; M)  \) denote the map induced by \(\mathfrak{s}\), and let \(\partial^{2}: C^{1}(X_{K}; M)\to C^{2}(X_{K}; M) \) be the coboundary homomorphism.
Consider the short exact sequence of complexes
\[
0 \longrightarrow C^*(X_K;M) \xrightarrow{\ \Delta \cdot\ } C^*(X_K;M) \xrightarrow{\ \mathrm{proj}_*\ } C^*(X_K;M/\Delta M) \longrightarrow 0,
\]
which can be written in low degrees as
\[\begin{tikzcd}[ampersand replacement=\&]
	0 \& {C^2 ( X_K ; M )} \& {C^2 ( X_K ; M )} \& {C^2 ( X_K ; M/\Delta M )} \& 0 \\
	0 \& {C^1 ( X_K ; M )} \& {C^1 ( X_K ; M )} \& {C^1 ( X_K ; M/\Delta M )} \& 0.
	\arrow[from=1-1, to=1-2]
	\arrow["{\Delta \cdot}", from=1-2, to=1-3]
	\arrow[from=1-3, to=1-4]
	\arrow[from=1-4, to=1-5]
	\arrow[from=2-1, to=2-2]
	\arrow[from=2-2, to=1-2]
	\arrow[from=2-2, to=2-3]
	\arrow["{\partial^2}", from=2-3, to=1-3]
	\arrow["{\mathrm{proj}_*}", from=2-3, to=2-4]
	\arrow[from=2-4, to=1-4]
	\arrow[from=2-4, to=2-5]
\end{tikzcd}\]
A diagram chase shows that the image of $\partial^2 \circ \mathfrak{s}^*$ is contained in
$\Delta \cdot C^2(X_K;M)$.
Since multiplication by $\Delta$ on $C^2(X_K;M)$ is injective, we obtain
\[
\dfrac{1}{\Delta}\;\partial^{2}\circ\mathfrak{s}^* \colon
C^{1}(X_{K}; M/\Delta M) \longrightarrow C^{2}(X_{K}; M),
\]
which induces the Bockstein homomorphism on cohomology.
\end{Remark}

\section{Recovering chain complexes of $X_K$ from taut identities}\label{sec:cell}
To compute the Blanchfield pairing and the twisted Blanchfield pairing, we give an explicit description of a chain complex $C_* (\widetilde{X}_K)$ together with a diagonal approximation map $D^\sharp$.
In this section, we first recall how to compute the chain complex via Fox derivatives and taut identities from \cite{Sie,Tro,Nos1}.
Then we give an explicit taut identity corresponding to $X_{T(m,n)}$ (see Theorem \ref{thm:tiden}).

\subsection{Taut identities}\label{sec:fox}
In this subsection, we briefly review taut identities.
No new results are proved here.

\noindent
Suppose that $X_K$ has a genus-$g$ Heegaard splitting, which gives rise to a group presentation 
\begin{equation}\label{eq:genpre}
\pi_1(X_K) = \langle x_i \ (i = 1, \ldots, g ) \mid \gamma_i \ (i = 1, \ldots, g) \rangle.
\end{equation}
Here $x_i$ and $\gamma_i$ correspond to the $1$-handles and $2$-handles of the Heegaard splitting, respectively. 
Let
\[
F_g \coloneqq \langle x_i \ (i = 1, \ldots, g ) \mid \ \rangle
\quad\text{and}\quad
R_g \coloneqq \langle \widetilde{\gamma}_i \ (i = 1, \ldots, g) \mid \ \rangle
\]
be free groups of rank $g$.
In addition, for $i=1,\ldots,g$, we define
$[\,\cdot\,] : F_g \longrightarrow \pi_1(X_K)$ and $\psi : R_g*F_g \longrightarrow F_g$
by 
\[
[x_i] \coloneqq x_i, \qquad \psi(x_i) \coloneqq x_i, \qquad \psi(\widetilde{\gamma}_i) \coloneqq \gamma_i.
\]
An element $\sigma \in R_g*F_g$ is called an identity among relations if $\sigma \in \ker \psi$ and $\sigma$ can be written as
$
\sigma = \prod_{i=1}^{\ell} \omega_i \widetilde{\gamma}_{j_i}^{\varepsilon_i} \omega_i^{-1}
$
for some $\ell\in\N$, $\varepsilon_i \in \{ \pm 1 \}$, $j_i \in \{ 1, \ldots, g\}$ and $\omega_i \in F_g$.

For the presentation \eqref{eq:genpre} and an identity $\sigma \in R_g * F_g$, consider the complex
\begin{equation}\label{eq:cellofuniv}
\Z[\pi_1(X_K)] 
\xrightarrow{\partial_3}
\Z[\pi_1(X_K)]^g 
\xrightarrow{\partial_2}
\Z[\pi_1(X_K)]^g
\xrightarrow{\partial_1}
\Z[\pi_1(X_K)] 
\longrightarrow
\Z
\longrightarrow
0.
\end{equation}
Here, for $i,j \in \{ 1, \ldots ,g \}$, the boundary maps $\partial_k$ are defined by 
\begin{equation*}
	\begin{aligned}
		\left( \text{the $(1,j)$-entry of $\partial_1$} \right) &= 1-x_j, \\
		\left(\text{the $(i,j)$-entry of $\partial_2$} \right) &= \left[ \frac{\partial \gamma_j}{\partial x_i } \right], \\[5pt]
		\left(\text{the $(i,1)$-entry of $\partial_3$} \right) &= \left[\psi \left(\frac{\partial \sigma }{\partial \widetilde{\gamma}_i } \right)\right],
	\end{aligned}
\end{equation*}
where $\dfrac{\partial \,\cdot\, }{\partial x }$ denotes the Fox derivative with respect to $x$. 
See \cite[Sections~4 and 5]{Lyn} for a proof that \eqref{eq:cellofuniv} is indeed a chain complex.
We say that an identity $\sigma$ realizes $X_K$ if the chain complex \eqref{eq:cellofuniv} is chain equivalent to $C_*(\widetilde{X}_K)$.
According to \cite{Sie}, among identities, there exists an identity $\sigma$ realizing $X_K$. 
In addition, we call an identity that realizes a $3$-manifold a taut identity.
See Remark~\ref{rmk:taut} for the tautness.

Suppose that $\sigma \in R_g * F_g$ is a taut identity realizing $X_K$.
In this case, $\sigma$ can be written in $\sigma = \prod_{i=1}^{2g}\omega_i \widetilde{\gamma}_{j_i}^{\varepsilon_i} \omega_i^{-1}$.
With respect to the chain complex \eqref{eq:cellofuniv}, as in \cite[p.~474]{Tro}, we can explicitly describe a diagonal approximation map $D^\sharp$ as follows.
Let $\{ h_1^{(1)}, h_2^{(1)}, \ldots, h_{g}^{(1)} \}$ be the canonical basis for
$C_1 (\widetilde{X}_K) = \Z[\pi_1(X_K)]^g .$
Similarly, let $\{h_1^{(2)}, h_2^{(2)}, \ldots, h_{g}^{(2)}\}$ and $h_{1}^{(3)}$ be the canonical bases of $C_2 (\widetilde{X}_K) = \Z[\pi_1(X_K)]^g$ and $C_3 (\widetilde{X}_K) = \Z[\pi_1(X_K)] ,$ respectively.
Then,
\begin{equation*}
D^\sharp (h_{1}^{(3)}) = 
\sum_{i=1}^{2g} \varepsilon_i \left( \sum_{\ell=1}^{g} \left[\frac{\partial \omega_i}{\partial x_\ell}\right] h_\ell^{(1)} \otimes [\omega_i] \, h_{j_i}^{(2)}\right)
\in C_1 (\widetilde{X}_K) \otimes C_2 (\widetilde{X}_K).
\end{equation*}
Here $D^\sharp$ is a diagonal approximation as in \eqref{eq:diagonal}.

\begin{Remark}\label{rmk:taut}
The tautness encodes a topological procedure describing the boundary operator of the $3$-cell in $X_K$ by identifying the faces of a polyhedron in pairs, and it can be checked by a diagrammatic argument (see, for example, \cite[p.~126]{Sie}, \cite[p.~470]{Tro}, or \cite[Section~2]{Nos1}).
Since this diagrammatic method is complicated, it is often omitted.
In this paper we likewise omit a detailed explanation of this diagrammatic method; instead, following the method of \cite[p.~126]{Sie}, we give a diagrammatic verification that the identity \eqref{eq:tiden} is taut, as shown in Fig.~\ref{fig:taut}.
\end{Remark}

\subsection{A taut identity corresponding to $X_{T(m,n)}$}\label{sec:tidenT}
In this subsection, we construct a taut identity for the closed $3$-manifold $X_{T(m,n)}$.
From now on, let $m$ and $n$ be relatively prime integers greater than $1$.
Choose integers $r$ and $s$ such that
\begin{equation}\label{eq:mrns}
mr + ns = 1, \qquad -n < r < 0 < s < m.
\end{equation}

For this purpose, we begin by recalling the torus knots $T(m,n)$.
The torus knot $T(m,n)$ is defined by the embedding
\begin{equation*}
T(m,n) \coloneqq \{ (e^{2 \pi \sqrt{-1} m \theta}, e^{2 \pi \sqrt{-1} n \theta}) \mid \theta \in \R\} \subset S^1 \times S^1 \subset S^3 ,
\end{equation*}
where the torus $S^1 \times S^1 \subset S^3$ is the boundary of a tubular neighborhood of the unknot in $S^3$.
It is well known (see, for example, \cite{Lic}) that
$
\pi_1(S^3 \setminus T(m,n)) \cong \langle x,y \mid x^m y^{-n} \rangle .
$
With respect to this presentation, a meridian of $T(m,n)$ is represented by
$
\mu \coloneqq x^s y^r,
$
and a preferred longitude is represented by $\mu^{-mn} y^n$ (see \cite{Nos3}).
Hence, we obtain a presentation
\begin{equation}\label{eq:preXT}
\pi_1(X_{T(m,n)}) \cong \langle x,y \mid x^m y^{-n}, \mu^{-mn} y^n \rangle .
\end{equation}
By the same procedure as in \cite[Section~7]{BCP3}, we can construct a genus--$2$ Heegaard splitting of $X_{T(m,n)}$ associated with the presentation \eqref{eq:preXT}.

We now give an explicit description of the taut identity realizing $X_{T(m,n)}$.
As in Section~\ref{sec:fox}, we consider the two free groups $F_2$ and $R_2$ and the map $\psi \colon R_2 * F_2 \to F_2$ associated to the presentation \eqref{eq:preXT}. 
That is, $\psi(\widetilde{\gamma}_1)=\gamma_1=x^m y^{-n}$ and $\psi(\widetilde{\gamma}_2)=\gamma_2=\mu^{-mn} y^n$.
\begin{Theorem}\label{thm:tiden}
\begin{equation}\label{eq:tiden}
(x^s \widetilde{\gamma}_1^{-1} x^{-s}) \, \widetilde{\gamma}_1 \, (\mu^{mn} \widetilde{\gamma}_2 \mu^{-mn}) \, (\mu^{mn+1} \widetilde{\gamma}_2^{-1} \mu^{-mn-1})
\end{equation}
is a taut identity for the presentation \eqref{eq:preXT} of $\pi_1 (X_{T(m,n)})$.
Moreover, this taut identity realizes $X_{T(m,n)}$.
\end{Theorem}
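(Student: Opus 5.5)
The proof has three steps: check that \eqref{eq:tiden} is an identity among relations, check that it is taut, and check that the resulting complex \eqref{eq:cellofuniv} is chain equivalent to $C_*(\widetilde{X}_{T(m,n)})$.

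\emph{Identity among relations.} The element \eqref{eq:tiden} is visibly a product of four conjugates $\omega_i\widetilde{\gamma}_{j_i}^{\varepsilon_i}\omega_i^{-1}$, which is the right number $2g=4$ for $g=2$. So it only remains to show that $\psi$ kills it. The key observation is that $x^m$ and $y^n$ commute with $x$ in $F_2$. Applying $\psi$ to the first two factors gives
\[
x^s y^n x^{-m} x^{-s}\, x^m y^{-n} = x^s y^n x^{-s} y^{-n},
\]
where we used that $x^{-m}$ commutes with $x^{-s}$. For the last two factors, write $\gamma_2=\mu^{-mn}y^n$. Then
\[
\mu^{mn}\gamma_2\mu^{-mn} = y^n\mu^{-mn}, \qquad \mu^{mn+1}\gamma_2^{-1}\mu^{-mn-1} = \mu^{mn+1}y^{-n}\mu^{-1}\mu^{-mn}.
\]
Their product is $y^n\mu\, y^{-n}\mu^{-1}\mu^{-mn}\cdot\mu^{mn}$, which simplifies to $y^n\mu y^{-n}\mu^{-1}$. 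Now substitute $\mu=x^sy^r$; since $y^r$ commutes with $y^{\pm n}$, this becomes $y^n x^s y^{-n} x^{-s}$. Multiplying the two halves gives
\[
\bigl(x^s y^n x^{-s} y^{-n}\bigr)\bigl(y^n x^s y^{-n}x^{-s}\bigr) = 1
\]
in $F_2$. This step is a routine free-group computation, and I would present it in exactly this order.

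\emph{Tautness.} Following Remark~\ref{rmk:taut} and \cite[p.~126]{Sie}, I would draw the polyhedron whose boundary is tiled by the four relator discs. Their boundary words are read from the conjugating elements $x^s$, $1$, $\mu^{mn}$, $\mu^{mn+1}$. One then checks that the edges are identified in pairs, compatibly with the presentation complex of \eqref{eq:preXT}, so that the result is a closed $3$-manifold with the right face pairing. Geometrically, the two $\gamma_1$ faces differ by conjugation by $x^s$ and the two $\gamma_2$ faces by conjugation by $\mu$. This matches the fact that $X_{T(m,n)}$ fibers over $S^1$, or alternatively is glued from the Seifert piece and a solid torus along the torus carrying the commuting pair $(\mu,\,y^n)$. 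This step would be carried out as the figure (Fig.~\ref{fig:taut}).

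\emph{Realizing $X_{T(m,n)}$.} I expect this to be the main obstacle: tautness gives some closed $3$-manifold, and we must show it is $X_{T(m,n)}$. I would construct the genus-two Heegaard splitting of $X_{T(m,n)}$ associated to \eqref{eq:preXT} as in \cite[Section~7]{BCP3}. Its two $2$-handles are attached along curves representing $\gamma_1$ and $\gamma_2$. Then I would verify that the $3$-cell obtained by collapsing the complement has attaching map reading the word \eqref{eq:tiden}. Concretely, the $\gamma_1$-disc and its pushoff along $x^s$ bound the cylinder over the exterior torus $T^2$. Likewise, the $\gamma_2$-disc (the longitude disc of the surgery solid torus) and its pushoff along the meridian $\mu$ bound the glued solid torus. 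Comparing the two decompositions shows the boundary word coincides with \eqref{eq:tiden}, up to the moves (conjugation, cyclic permutation) that preserve chain homotopy type. Hence \eqref{eq:cellofuniv} is chain equivalent to $C_*(\widetilde{X}_{T(m,n)})$. As a sanity check, I would compute $\partial_3$ via Fox derivatives, abelianize, and confirm that $H_*(X;\Lambda_\Z)$ has $H_1\cong\Lambda_\Z/(\Delta_{T(m,n)})$ and $H_2\cong\Lambda_\Z/(1-t)$.
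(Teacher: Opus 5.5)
\textbf{The realization step is asserted, not proved.} Your verification that $\psi$ kills \eqref{eq:tiden} reaches the right conclusion, but the intermediate line has a slip. The fourth factor is $\psi(\mu^{mn+1}\widetilde{\gamma}_2^{-1}\mu^{-mn-1})=\mu^{mn+1}y^{-n}\mu^{mn}\mu^{-mn-1}=\mu^{mn+1}y^{-n}\mu^{-1}$, with no trailing $\mu^{-mn}$. The product of the last two factors is then directly $y^n\mu y^{-n}\mu^{-1}$; both your spurious $\mu^{-mn}$ and the compensating $\mu^{mn}$ should go. Deferring tautness to a Siebenmann-style face-pairing picture is at the same level as the paper's Fig.~\ref{fig:taut}.

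The gap is the last step, which you yourself flag as the main obstacle. You propose to read the attaching map of the $3$-cell off the Heegaard splitting of \cite[Section~7]{BCP3} and compare, but you never do this. The sentences about the $\gamma_1$-disc and its push-off ``bounding the cylinder over the exterior torus'', and the $\gamma_2$-disc and its push-off along $\mu$ ``bounding the glued solid torus'', are heuristics. They do not identify the boundary word. ``Comparing the two decompositions shows the boundary word coincides with \eqref{eq:tiden}'' is exactly the claim to be proved. Your homology sanity check cannot close this either: agreement of $H_*(\,\cdot\,;\Lambda_\Z)$ says nothing about chain equivalence over $\Z[\pi_1(X_{T(m,n)})]$.

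The missing idea, which is how the paper avoids any geometric tracing, is asphericity. Since $T(m,n)$ is nontrivial, $X_{T(m,n)}$ is aspherical by \cite{Gab}. For a presentation coming from a Heegaard splitting of an aspherical closed oriented $3$-manifold, \cite{BCP3} gives a unique nontrivial identity up to conjugation and Peiffer moves. The underlying reason is that $\pi_2$ of the presentation complex is $\ker\partial_2=\Ima\partial_3$, a free $\Z[\pi_1]$-module of rank one generated by the actual $3$-cell. Meanwhile \cite{Sie} guarantees that some taut identity realizing $X_{T(m,n)}$ exists. The identity \eqref{eq:tiden} is nontrivial, since its Fox derivatives with respect to $\widetilde{\gamma}_i$ do not vanish, so it must be the realizing one, and no comparison with the Heegaard diagram is needed. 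Your direct route could in principle give a more self-contained proof that avoids Gabai and \cite{BCP3}, but as written it does not establish the key step.
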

\begin{proof}
A direct computation shows that the word \eqref{eq:tiden} is an identity, and
Fig.~\ref{fig:taut} gives a diagrammatic verification that it is taut.

It remains to prove that this taut identity realizes $X_{T(m,n)}$.
It is known that, for any nontrivial knot $K$, the closed $3$-manifold $X_K$ is aspherical \cite{Gab}.
Moreover, \cite{BCP3} shows that if a presentation of the fundamental group of an aspherical closed oriented $3$-manifold is obtained from a Heegaard splitting, then there is a unique nontrivial identity modulo conjugation and Peiffer identities.
In addition, \cite{Sie} shows that, for every closed $3$-manifold $X$ and every balanced presentation of $\pi_1 (X)$, there exists a taut identity for that presentation realizing $X$.
Combining these facts, we conclude that, up to conjugation and Peiffer identities, there is a unique nontrivial taut identity realizing $X_{T(m,n)}$ for the presentation \eqref{eq:preXT} of $\pi_1(X_{T(m,n)})$.
Hence, the taut identity in the theorem must realize $X_{T(m,n)}$.
\begin{figure}[tbp]
  \centering
  \includegraphics[width=0.7\textwidth]{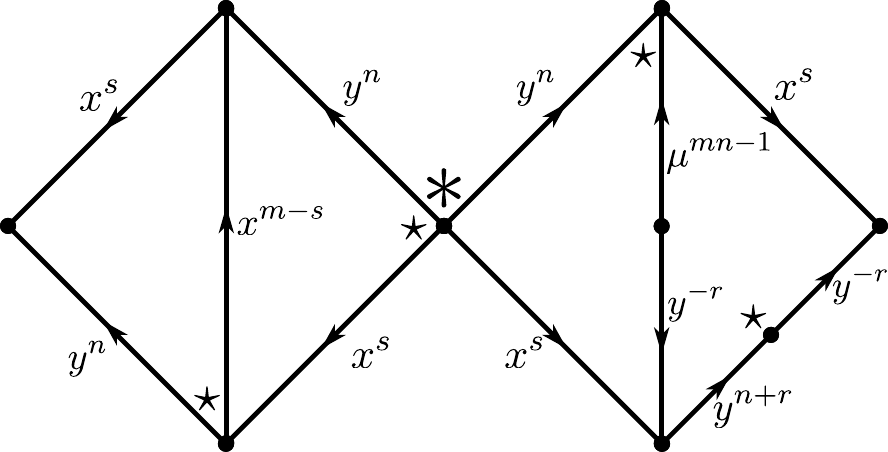}
  \caption{A net of the polyhedron corresponding to the taut identity \eqref{eq:tiden}.
  $*$ is the basepoint of the polyhedron, and $\star$ denotes the basepoint on each face.
  The two faces on the left are oriented clockwise, while the two faces on the right are oriented counterclockwise.
  From left to right, the faces correspond to $\gamma_1^{-1}, \gamma_1, \gamma_2^{-1}$, and $\gamma_{2}$. 
  For $i \in \{1,2\}$, in the polyhedron represented by this diagram, we obtain a CW–complex structure on $X_{T(m,n)}$ by identifying the face corresponding to $\gamma_i$ with the face corresponding to $\gamma_i^{-1}$ so that the basepoints $\star$ and the labels $x$ and $y$ match}
  \label{fig:taut}
\end{figure}
\end{proof}
\begin{Remark}
We briefly recall some known facts about torus knots.

The Levine--Tristram signatures of torus knots are computed explicitly in \cite{Kea,Mat,Lit}. 
In particular, Kearton \cite{Kea} describes the Blanchfield pairing of a torus knot over 
$\R[t^{\pm1}]$ in terms of the primary decomposition of 
$H_1(X_K;\R[t^{\pm1}])$, and Nosaka determines this pairing up to multiplication by a constant
by using quandle theory \cite{Nos2}. 
Twisted Blanchfield pairings associated with metabelian representations
of Casson--Gordon type for the torus knots $T(2,2\ell+1)$ are computed in \cite{BCP3}.
For Seifert matrices of torus knots, see \cite[Section~7]{Mur}, and for  constructions of Seifert surfaces of torus knots, see \cite{Nak}.
\end{Remark}

\section{Proof of Theorem \ref{thm:preBl}}\label{sec:prBl}
In this section, we prove Theorem~\ref{thm:preBl}.
First, combining the taut identity in Theorem~\ref{thm:tiden} with the procedure of Section~\ref{sec:fox}, we recover a chain complex associated with $X_{T(m,n)}$ (see Propositions~\ref{pro:cellX1} and~\ref{pro:cellX2}).
Then, using these chain complexes, we explicitly describe a generator of $H^1\bigl(X_{T(m,n)}; \Lambda_\Z/(\Delta_{T(m,n)})\bigr)$ (see Lemma~\ref{lem:ba1}).
Finally, using this generator, we compute the Blanchfield pairing of $T(m,n)$ directly from the definition.

In this section, the integers $m,n,r$, and $s$ are as in \eqref{eq:mrns} in Section~\ref{sec:tidenT}.
For $u \in \Lambda_\Z^\times$ and $\ell \in \Z \setminus \{0\}$, we adopt the convention
\[
\dfrac{1-u^\ell}{1-u} \coloneqq 
\left\{
\begin{aligned}
&\sum_{i=0}^{\ell-1} u^i, &&\text{if $\ell>0$,}\\
&-\sum_{i=1}^{-\ell} u^{-i}, &&\text{otherwise.}
\end{aligned}
\right.
\]
We begin by recovering the chain complex $C_*(X_{T(m,n)}; \Lambda_\Z)$.
Note that, for the presentation \eqref{eq:preXT} of $\pi_1(X_{T(m,n)})$, the abelianization map
$
\pi_1(X_{T(m,n)}) \longrightarrow \langle t \rangle
$
sends $x$ to $t^n$ and $y$ to $t^m$.

\begin{Proposition}\label{pro:cellX1}
The chain complex $C_*(X_{T(m,n)}; \Lambda_\Z)$ is chain homotopy equivalent to the chain complex
\begin{equation*}
0 
\longrightarrow \Lambda_\Z 
\xrightarrow{\, \partial_3\, } \Lambda_\Z^2 
\xrightarrow{\, \partial_2\, } \Lambda_\Z^2 
\xrightarrow{\, \partial_1\, } \Lambda_\Z 
\longrightarrow 0 ,
\end{equation*}
where $\partial_1$, $\partial_2$, and $\partial_3$ are presented by
\begin{equation*}
\begin{aligned}
&\partial_1 = 
\begin{pmatrix}
1-t^{-n} & 1-t^{-m} 
\end{pmatrix}, \\[10pt]
&\partial_2 = 
\begin{pmatrix}
 t^{\,n-mn}\,\dfrac{1 - t^{mn}}{1 - t^{n}} &
 -\,t^{\,mr+n}\,\dfrac{1 - t^{mn}}{1 - t^{n}} \dfrac{1 - t^{ns}}{1 - t} \\[15pt]
 -\,t^{\,m-mn}\,\dfrac{1 - t^{mn}}{1 - t^{m}} &
 t^{\,mr+m}\,\dfrac{1 - t^{mn}}{1 - t^{m}} \dfrac{1 - t^{ns}}{1 - t}
\end{pmatrix}, \\[10pt]
&\partial_3 = 
\begin{pmatrix}
1-t^{-ns} \\
t^{-mn} - t^{-mn-1} 
\end{pmatrix}.
\end{aligned}
\end{equation*}
Moreover, the cochain complex $C^*(X_{T(m,n)}; \Lambda_\Z)$ is chain homotopy equivalent to the chain complex
\begin{equation*}
0 
\longleftarrow \Lambda_\Z 
\xleftarrow{\, \partial_3^{\# \top}\, } \Lambda_\Z^2 
\xleftarrow{\, \partial_2^{\# \top}\, } \Lambda_\Z^2 
\xleftarrow{\, \partial_1^{\# \top}\, } \Lambda_\Z 
\longleftarrow 0 .
\end{equation*}
For these (co)chain complexes, the homomorphism
$C^2(X_{T(m,n)}; \Lambda_\Z) \longrightarrow C_1(X_{T(m,n)}; \Lambda_\Z)$
presented by the matrix
\begin{equation*}
\begin{pmatrix}
 -\,t^{\,n}\,\dfrac{1 - t^{ns}}{1 - t^{n}} &
 -\,t^{\,mn + mr + n}\,\dfrac{1 - t^{ns}}{1 - t^{n}} \\[6pt]
 0 &
 -\,t^{\,mn + m}\,\dfrac{1 - t^{mr}}{1 - t^{m}}
\end{pmatrix}
\end{equation*}
induces the Poincar\'e duality isomorphism.
\end{Proposition}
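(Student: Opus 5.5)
Since Theorem~\ref{thm:tiden} shows that the identity \eqref{eq:tiden} realizes $X_{T(m,n)}$, the complex \eqref{eq:cellofuniv} built from the presentation \eqref{eq:preXT} and this identity is chain equivalent to $C_*(\widetilde{X}_{T(m,n)})$. Tensoring with $\Lambda_\Z$ via the abelianization ($x\mapsto t^n$, $y\mapsto t^m$, $\mu\mapsto t^{ns+mr}=t$) gives the first claim, once the three boundary matrices are computed. The right-action convention $m\cdot g=g^{-1}m$ in $C_*(\widetilde X_K)^{\mathrm{op}}\otimes M$ turns each group element $g$ into $t^{-\deg g}$. I therefore evaluate every Fox derivative and then apply $t\mapsto t^{-1}$, possibly together with a transpose depending on the row/column convention.

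The computations go as follows. For $\partial_1$, the entries $1-x$ and $1-y$ become $1-t^{-n}$ and $1-t^{-m}$. For $\partial_2$, we have $\partial(x^my^{-n})/\partial x=\frac{1-x^m}{1-x}$, which gives $\frac{1-t^{mn}}{1-t^n}$ up to a monomial; the other derivatives are similar. For $\gamma_2=\mu^{-mn}y^n$ with $\mu=x^sy^r$, the chain rule gives
\[
\frac{\partial\gamma_2}{\partial x}=\frac{\partial \mu^{-mn}}{\partial\mu}\frac{\partial\mu}{\partial x},\qquad \frac{\partial \mu^{-mn}}{\partial\mu}\mapsto -t^{-mn}\frac{1-t^{mn}}{1-t},\qquad \frac{\partial\mu}{\partial x}\mapsto\frac{1-t^{ns}}{1-t^n}.
\]
Combining these reproduces the displayed entries, including the factor $\frac{1-t^{mn}}{1-t^n}\frac{1-t^{ns}}{1-t}$. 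For $\partial_3$, I apply $\psi(\partial\sigma/\partial\widetilde\gamma_i)$ to \eqref{eq:tiden}. This gives $-x^s+1$ for $\widetilde\gamma_1$ and $\mu^{mn}-\mu^{mn+1}$ for $\widetilde\gamma_2$; after abelianizing and applying the op-convention these become $1-t^{-ns}$ and $t^{-mn}-t^{-mn-1}$. The cochain statement is formal. Identifying $\Hom_{\Z[\pi]}(\Z[\pi]^k,\Lambda_\Z)\cong\Lambda_\Z^k$ through the canonical bases, the coboundaries become the matrices $\partial_i^{\#\top}$: the transpose comes from dualizing, and $\#$ comes from the conversion between the op-action and the left action.

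For Poincar\'e duality I use the cap product with the fundamental class $[h^{(3)}_1\otimes 1]$ and the diagonal approximation $D^\sharp$ of Section~\ref{sec:fox}. The four terms of \eqref{eq:tiden} are $\omega_1=x^s$ ($\varepsilon=-1$, $j=1$), $\omega_2=1$ ($\varepsilon=+1$, $j=1$), $\omega_3=\mu^{mn}$ ($\varepsilon=+1$, $j=2$), and $\omega_4=\mu^{mn+1}$ ($\varepsilon=-1$, $j=2$). The term with $\omega_2=1$ contributes nothing, since its Fox derivatives vanish. For a cochain $\omega=e^{(2)}_j$ dual to $h^{(2)}_j$, the element $(\mathrm{id}\otimes\omega)D^\sharp(h_1^{(3)})$ is
\[
\sum_{i:\,j_i=j}\varepsilon_i\sum_\ell\Bigl[\frac{\partial\omega_i}{\partial x_\ell}\Bigr]h^{(1)}_\ell\,\omega([\omega_i]h^{(2)}_j).
\]
After abelianizing, this gives column $j$ of the PD matrix.

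For $j=1$, only $\omega_1=x^s$ contributes, giving $-\frac{1-x^s}{1-x}$ in the $x$-slot and $0$ in the $y$-slot, multiplied by the monomial from $[\omega_1]=x^s$. This matches the first column $(-t^n\frac{1-t^{ns}}{1-t^n},0)^\top$ up to the monomial and sign convention. For $j=2$, the derivatives of $\mu^{mn}$ and $\mu^{mn+1}$ enter with opposite signs. Their difference is $\mu^{mn}\,\partial\mu/\partial x_\ell$ times the monomial $\mu^{mn}$ or $\mu^{mn+1}$. Since $\partial\mu/\partial x=\frac{1-x^s}{1-x}$ and $\partial\mu/\partial y=x^s\frac{1-y^r}{1-y}$, this yields the second column with the factors $\frac{1-t^{ns}}{1-t^n}$ and $\frac{1-t^{mr}}{1-t^m}$.

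The main obstacle is bookkeeping rather than ideas. The exact monomials $t^{n}$, $t^{mn+mr+n}$ and $t^{mn+m}$, and the signs, depend on the identification $C^2\cong\Lambda_\Z^2$ (where $\#$ enters), on the op-convention, and on the slot of the tensor product in which the group element acts. I would fix these conventions once, following \cite{Bro} and \cite{Tro}, and check them against the expected consequence that the PD matrix $P$ is a chain map up to homotopy, i.e.\ that $\partial_1 P=\pm\partial_3^{\#\top}$ holds on the nose. This relation pins down the normalization. That the map induces an isomorphism on homology then follows from the general fact that capping with the fundamental class is Poincar\'e duality.
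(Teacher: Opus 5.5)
Your proposal is correct and is the paper's own argument: compute $\partial_1,\partial_2,\partial_3$ by Fox calculus from \eqref{eq:preXT} and \eqref{eq:tiden} under the op-convention $g\mapsto t^{-\deg g}$, then get the duality matrix $P$ from Trotter's formula for $D^\sharp$; the displayed $P$ does satisfy $\partial_1P=\partial_3^{\#\top}$ on the nose. The one step to tighten is the second column of $P$: there the $\mu^{mn}$ and $\mu^{mn+1}$ terms carry different weights $t^{mn}$ and $t^{mn+1}$. Their geometric sums telescope to $-t^{mn+1}$ times the $\#$-conjugate of the abelianized $\partial\mu/\partial x_\ell$, which gives exactly $-t^{mn+mr+n}\frac{1-t^{ns}}{1-t^n}$ and $-t^{mn+m}\frac{1-t^{mr}}{1-t^m}$. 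You should compute it this way rather than fix the monomials by the chain-map check, because the column $\bigl(-t^{mn+n}\frac{1-t^{ns}}{1-t^n},\,-t^{mn+m+ns}\frac{1-t^{mr}}{1-t^m}\bigr)^{\top}$ also passes that check.
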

\begin{proof}
By applying the method of Section~\ref{sec:fox} to the taut identity in Theorem~\ref{thm:tiden}, a direct computation yields the above formulas.
\end{proof}
\noindent
From this proposition, a straightforward calculation shows
\begin{equation}\label{eq:alex}
\Delta_{T(m,n)} = t^{-(m-1)(n-1)/2}\,\frac{(1-t)(1-t^{mn})}{(1-t^m)(1-t^n)}.
\end{equation}
For the Alexander polynomial of torus knots, see also \cite{Lic}.
\begin{Proposition}\label{pro:cellX2}
Let $\partial_1$ and $\partial_3$ be the matrices in Proposition~\ref{pro:cellX1}.
Then the cochain complex $C^*(X_{T(m,n)}; \Lambda_\Z / (\Delta_{T(m,n)}))$ is chain homotopy equivalent to the chain complex
\begin{equation*}
0 
\longleftarrow \Lambda_\Z / (\Delta_{T(m,n)})
\xleftarrow{\, \partial_3^{\# \top} \,} (\Lambda_\Z / (\Delta_{T(m,n)}))^2 
\xleftarrow{\, 0 \, } (\Lambda_\Z / (\Delta_{T(m,n)}))^2 
\xleftarrow{\, \partial_1^{\# \top} \,} \Lambda_\Z / (\Delta_{T(m,n)})
\longleftarrow 0 .
\end{equation*}
\end{Proposition}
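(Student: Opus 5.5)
The plan is to obtain the complex in Proposition~\ref{pro:cellX2} by reducing the cochain complex of Proposition~\ref{pro:cellX1} modulo $\Delta_{T(m,n)}$. Since $C^*(X_{T(m,n)};\Lambda_\Z/(\Delta_{T(m,n)}))=\Hom_{\Z[\pi_1]}(C_*(\widetilde X_{T(m,n)}),\Lambda_\Z/(\Delta_{T(m,n)}))$, first apply the chain homotopy equivalence of $C_*(\widetilde X_{T(m,n)})$ with the complex \eqref{eq:cellofuniv} coming from the taut identity of Theorem~\ref{thm:tiden}. Because $\Hom$ into any module preserves chain homotopy equivalences, the twisted cochain complex is chain homotopy equivalent to the complex
\[
\Lambda_\Z/(\Delta) \xleftarrow{\partial_3^{\#\top}} (\Lambda_\Z/(\Delta))^2 \xleftarrow{\partial_2^{\#\top}} (\Lambda_\Z/(\Delta))^2 \xleftarrow{\partial_1^{\#\top}} \Lambda_\Z/(\Delta),
\]
where $\Delta=\Delta_{T(m,n)}$ and the matrices are those of Proposition~\ref{pro:cellX1}, now read modulo $\Delta$. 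The Fox-calculus matrices are the same; only the coefficient ring changes.

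The substance of the argument is then to show that $\partial_2^{\#\top}\equiv 0 \pmod{\Delta}$, that is, every entry of $\partial_2$ is divisible by $\Delta$ in $\Lambda_\Z$. The divisibility is unaffected by $\#$, because $\Delta^\#=\Delta$ up to a unit. Each entry of $\partial_2$ is a unit monomial times either
\[
\frac{1-t^{mn}}{1-t^{n}} \quad\text{or}\quad \frac{1-t^{mn}}{1-t^{m}},
\]
possibly multiplied by $(1-t^{ns})/(1-t)$. Up to a unit, $\Delta$ equals
\[
\frac{(1-t)(1-t^{mn})}{(1-t^m)(1-t^n)}.
\]
Therefore
\[
\frac{1-t^{mn}}{1-t^{n}}=\Delta\cdot t^{(m-1)(n-1)/2}\,\frac{1-t^{m}}{1-t},
\]
and $(1-t^m)/(1-t)$ is a genuine Laurent polynomial. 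The same holds with $m$ and $n$ interchanged. So each entry is $\Delta$ times a Laurent polynomial, and the middle map vanishes.

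The only subtle point, which I consider the main obstacle, is making sure these quotients are honest elements of $\Lambda_\Z$ rather than formal rational expressions. For this I would use cyclotomic factorizations. Since $\gcd(m,n)=1$, the polynomials $1-t^m$ and $1-t^n$ share only the factor $1-t$. Hence $(1-t^m)(1-t^n)/(1-t)$ divides $1-t^{mn}$, which confirms that $\Delta\in\Lambda_\Z$. It also confirms that the factorization displayed above is a factorization in $\Lambda_\Z$.

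Finally, the outer maps $\partial_1^{\#\top}$ and $\partial_3^{\#\top}$ are simply reduced modulo $\Delta$, and this gives exactly the complex stated in Proposition~\ref{pro:cellX2}.
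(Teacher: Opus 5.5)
Your proposal is correct and matches the argument the paper leaves implicit: the paper states Proposition~\ref{pro:cellX2} without proof, right after Proposition~\ref{pro:cellX1} and \eqref{eq:alex}. As you do, one reduces that cochain complex modulo $\Delta_{T(m,n)}$ and observes that every entry of $\partial_2$ is a monomial times either $\frac{1-t^{mn}}{1-t^{n}}=\Delta_{T(m,n)}\,t^{(m-1)(n-1)/2}\,\frac{1-t^{m}}{1-t}$ or its $m\leftrightarrow n$ analogue, so that $\partial_2^{\#\top}\equiv 0$. Your check that these are honest factorizations in $\Lambda_\Z$ is the only point needing care, together with the fact that $(m-1)(n-1)/2\in\Z$, which holds because one of $m,n$ is odd.
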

\noindent
In order to compute the matrix presentation of the Blanchfield pairing, we now determine a basis for
$
H^1\bigl(X_{T(m,n)}; \Lambda_\Z/(\Delta_{T(m,n)})\bigr)
$
as a $\Lambda_\Z/(\Delta_{T(m,n)})$-module.
From Proposition~\ref{pro:cellX2}, we obtain the following.
\begin{Lemma}\label{lem:ba1}
As a $\Lambda_\Z/(\Delta_{T(m,n)})$-module, the vectors
$
\begin{pmatrix}
1 \\
1 
\end{pmatrix}
$ and $
\begin{pmatrix}
1 - t^n \\
1 - t^m 
\end{pmatrix}
$
form a basis for $C^1(X_{T(m,n)}; \Lambda_\Z/(\Delta_{T(m,n)}))$.
In particular, the cohomology group
$
H^1\bigl(X_{T(m,n)}; \Lambda_\Z/(\Delta_{T(m,n)})\bigr)
$
is a cyclic $\Lambda_\Z/(\Delta_{T(m,n)})$-module generated by $\begin{pmatrix} 1 \\ 1 \end{pmatrix}$.
\end{Lemma}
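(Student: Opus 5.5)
Write $A \coloneqq \Lambda_\Z/(\Delta_{T(m,n)})$. The plan has two parts. First, show that the two vectors form a basis of $C^1 = A^2$. Then read off $H^1$ from the complex of Proposition~\ref{pro:cellX2}.

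For the basis claim, I would show that the $2\times 2$ matrix
\[
P=\begin{pmatrix} 1 & 1-t^n \\ 1 & 1-t^m \end{pmatrix}
\]
is invertible over $A$. Its determinant is $(1-t^m)-(1-t^n)=t^n-t^m$, which up to a unit equals $1-t^{m-n}$ (or $t^{n-m}-1$). So it suffices to show that $1-t^{k}$ with $k=|m-n|$ is a unit in $A$. Since $\gcd(m,n)=1$, we also have $\gcd(k,mn)=1$. The roots of $\Delta_{T(m,n)}$ are the $mn$-th roots of unity that are neither $m$-th nor $n$-th roots of unity. Such a root $\zeta$ has order $d$ dividing $mn$ with $d\nmid m$ and $d\nmid n$, in particular $d>1$. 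A root of $1-t^k$ has order dividing $k$. Since $\gcd(k,mn)=1$, the only common root is $1$, and $1$ is not a root of $\Delta$. Hence the two polynomials are coprime over $\Q$.

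Coprimality over $\Q$ is not enough for invertibility over $\Z$, so this is the step I expect to be the main obstacle. I would argue via resultants. Up to a unit, $\Delta_{T(m,n)}=\prod_{d\mid mn,\, d\nmid m,\, d\nmid n}\Phi_d(t)$, and $1-t^k=\pm\prod_{e\mid k}\Phi_e(t)$. I would then use the classical fact that $\mathrm{Res}(\Phi_d,\Phi_e)=\pm1$ unless $d/e$ is a prime power (with $d\neq e$). For $d\mid mn$ with $d>1$ and $e\mid k$, we have $\gcd(d,e)=1$. Then $d/e$ is a prime power only if $e=1$ and $d$ is a prime power. But a prime power $d=p^a$ dividing $mn$ must divide $m$ or $n$, because $\gcd(m,n)=1$, and such $d$ are excluded. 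So every pairwise resultant is $\pm1$, and the total resultant is $\pm1$. Since both polynomials are monic up to units, a resultant of $\pm1$ gives a B\'ezout identity $u\Delta+v(1-t^k)=1$ in $\Z[t^{\pm1}]$. Therefore $1-t^k$ is a unit in $A$, $P$ is invertible, and the two vectors form a basis.

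For the cohomology, Proposition~\ref{pro:cellX2} shows that the coboundary $C^1\to C^2$ is zero, so every cochain in $C^1$ is a cocycle. The coboundaries are the image of $\partial_1^{\#\top}$, namely the multiples of
\[
\begin{pmatrix}1-t^{n}\\ 1-t^{m}\end{pmatrix},
\]
since $\#$ turns $1-t^{-n}$ into $1-t^n$. This is exactly the second basis vector. Hence $H^1 = A^2/A\cdot(1-t^n,1-t^m)^\top$, which is isomorphic to $A$ and is generated by the class of $(1,1)^\top$. In particular $H^1$ is cyclic, indeed free of rank one over $A$, generated by $\begin{pmatrix}1\\1\end{pmatrix}$.
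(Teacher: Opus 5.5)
Your proof is correct, and it follows the paper's outline: reduce the basis claim to showing that $\det = t^n-t^m$ is a unit in $\Lambda_\Z/(\Delta_{T(m,n)})$, then read off $H^1$ from Proposition~\ref{pro:cellX2}. Your $H^1$ step is the same as the paper's. At the key step, however, your argument is more careful than the paper's.

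The paper, like you, uses $\gcd(m-n,mn)=1$ and $\Delta_{T(m,n)}(1)\neq 0$ to see that $t^n-t^m$ and $\Delta_{T(m,n)}$ have no common complex root. It then passes directly from ``relatively prime in $\Lambda_\Z$'' to ``unit in the quotient.'' That inference is not valid in general in the non-PID $\Lambda_\Z$. For instance, $1+t$ and $\Delta_{T(2,3)}=t^{-1}(t^2-t+1)$ are monic up to units and coprime, yet $\Lambda_\Z/(\Delta_{T(2,3)},1+t)\cong\Z/3$.

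You spotted exactly this issue and closed it with cyclotomic resultants. Up to a unit, $\Delta_{T(m,n)}=\prod_{d\mid mn,\ d\nmid m,\ d\nmid n}\Phi_d$, and you apply the classical formula for $\mathrm{Res}(\Phi_d,\Phi_e)$. The only potentially bad pairs are $e=1$ with $d$ a prime power, and these are excluded because a prime power dividing $mn$ must divide $m$ or $n$. All of this is right. A resultant of $\pm1$ does give a B\'ezout identity in $\Z[t]$; monicity is not even needed for that.

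If you prefer not to quote the resultant theorem, a lighter integral argument works:
\begin{itemize}
\item In $A$ one has $t^{mn}=1$, since $\Delta_{T(m,n)}$ divides $1-t^{mn}$.
\item Choose $k'>0$ with $kk'\equiv 1 \pmod{mn}$. Then $(1-t^k)\sum_{j=0}^{k'-1}t^{jk}=1-t^{kk'}=1-t$ in $A$.
\item The element $1-t$ is a unit in $A$ because $\Delta_{T(m,n)}(1)=\pm1$. The paper itself uses this fact in the proof of Theorem~\ref{thm:preBl}.
\end{itemize}
Hence $1-t^k$ divides a unit and is therefore a unit.
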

\begin{proof}
For the first statement of the lemma, it suffices to show that the determinant of the matrix
$
\begin{pmatrix}
1 & 1 - t^n \\
1 & 1 - t^m 
\end{pmatrix},
$
namely $t^n - t^m$, is a unit in $\Lambda_\Z/(\Delta_{T(m,n)})$.
Since $m$ and $n$ are relatively prime, we have $\gcd(m-n,mn)=1$.
Hence
$
t^n - t^m = t^n(1 - t^{m-n}),
$
and the only common root of $1 - t^{m-n}$ and $1 - t^{mn}$ over $\C$ is $t=1$.
Using the expression \eqref{eq:alex} for $\Delta_{T(m,n)}$, we see that $t=1$ is not a root of $\Delta_{T(m,n)}$, and therefore $t^n - t^m$ and $\Delta_{T(m,n)}$ have no common root over $\C$.
Since both polynomials are monic, it follows that their greatest common divisor in $\Q[t^{\pm 1}]$ is $1$, and hence $t^n - t^m$ and $\Delta_{T(m,n)}$ are relatively prime in $\Lambda_\Z$.
Thus $t^n - t^m$ is a unit in $\Lambda_\Z/(\Delta_{T(m,n)})$.
This proves that 
$
\begin{pmatrix}
1\\
1
\end{pmatrix}
$ and $
\begin{pmatrix}
1 - t^n \\
1 - t^m 
\end{pmatrix}
$
form a basis for
$C^1(X_{T(m,n)}; \Lambda_\Z/(\Delta_{T(m,n)}))$ as a $\Lambda_\Z/(\Delta_{T(m,n)})$-module.

The second statement follows from the fact that the image of $\partial_1^{\# \top}$ is generated by
$
\begin{pmatrix}
1 - t^n \\
1 - t^m 
\end{pmatrix},
$
so 
$
H^1\bigl(X_{T(m,n)}; \Lambda_\Z/(\Delta_{T(m,n)})\bigr)
\cong
C^1\bigl(X_{T(m,n)}; \Lambda_\Z/(\Delta_{T(m,n)})\bigr) \big/ \mathrm{Im}(\partial_1^{\# \top})
$
is a cyclic $\Lambda_\Z/(\Delta_{T(m,n)})$-module generated by $\begin{pmatrix}1\\1\end{pmatrix}$.
\end{proof}
\noindent
Finally, we prove Theorem~\ref{thm:preBl}.
Recall that
\[
B(m,n) \coloneqq
t^{-(m+1)(n+1)/2} \,
\dfrac{(1 - t^{rm})(1 - t^{sn})(t^m - t^n)^2}{(1 - t^m)(1 - t^n)}
\in \Lambda_\Z .
\]
\begin{proof}[Proof of Theorem \ref{thm:preBl}]
Since $\Delta_{T(m,n)}(1) = \pm 1$ \cite{Lic}, the element $1 - t \in \Lambda_\Z / (\Delta_{T(m,n)})$ is a unit.
From Lemma~\ref{lem:ba1}, and since $1-t$ is a unit, we may choose
\[
\begin{pmatrix}
1-t \\
1-t 
\end{pmatrix}
\]
as a generator of $H^1\bigl(X_{T(m,n)}; \Lambda_\Z/(\Delta_{T(m,n)})\bigr)$ as a $\Lambda_\Z/ (\Delta_{T(m,n)})$-module.
Furthermore, as a generator of $H_1(X_{T(m,n)}; \Lambda_\Z)$, we may choose the image of
$\begin{pmatrix}
1-t \\
1-t 
\end{pmatrix}$
under $\mathrm{PD} \circ \beta$.
From Proposition~\ref{pro:cellX1}, with respect to these choices of bases, the Blanchfield pairing is presented by
\begin{equation*}
\frac{1}{ \Delta_{T(m,n)} }
\begin{pmatrix}
1-t \\
1-t 
\end{pmatrix}^{\# \top}
\begin{pmatrix}
 -\,t^{\,n}\,\dfrac{1 - t^{ns}}{1 - t^{n}} &
 -\,t^{\,mn + mr + n}\,\dfrac{1 - t^{ns}}{1 - t^{n}} \\[15pt]
 0 &
 -\,t^{\,mn + m}\,\dfrac{1 - t^{mr}}{1 - t^{m}}
\end{pmatrix}
\partial_2^{\# \top}
\begin{pmatrix}
1-t \\
1-t 
\end{pmatrix} ,
\end{equation*}
which simplifies to $t^{mn} B(m,n)$.
\end{proof}
\begin{Remark}
A direct computation shows that
$
(t^{mn} B(m,n))^\# = B(m,n),
$
and hence
\begin{equation*}
( t^{mn} B(m,n) )^\# - t^{mn} B(m,n)
= t^{-(m+n)} \Delta_{T(m,n)} \,
\dfrac{(1 - t^{rm})(1 - t^{sn})(t^m - t^n)^2}{1 - t}.
\end{equation*}
In particular, $( t^{mn} B(m,n) )^\# - t^{mn} B(m,n)$ is divisible by $\Delta_{T(m,n)}$ in $\Lambda_\Z$.
Therefore $t^{mn} B(m,n)$ in $\Lambda_\Z/(\Delta_{T(m,n)})$ is fixed by the involution $\#$, which shows that the Blanchfield pairing is a Hermitian sesquilinear form with respect to~$\#$.
\end{Remark}

\section{Proof of Theorem \ref{thm:pretAl} and \ref{thm:SptBL}}\label{sec:2THM}
In this section, we shall prove Theorems~\ref{thm:pretAl} and \ref{thm:SptBL}.
The proofs are given in Sections~\ref{sec:pretAl} and \ref{sec:SptBL}, respectively.

Throughout this section, the integers $m,n,r$ and $s$ are the same as \eqref{eq:mrns} in Section~\ref{sec:tidenT}.
For $a \in \Z_n \setminus \{0\}$, let $\mathcal{O}(a)$ denote the local ring of germs of holomorphic functions at the root of unity $e^{2\pi \sqrt{-1}\, a/n}$.
Let $I$ denote the $m\times m$ identity matrix, and let $0_{k,\ell}$ denote the $k\times \ell$ zero matrix.
For an invertible square matrix $A$ over $\Lambda_\C$ or $\mathcal{O}(a)$ and $\ell \in \Z \setminus \{0\}$, we use the notation
\[
\dfrac{I-A^\ell}{I-A} \coloneqq 
\left\{
\begin{aligned}
&\sum_{i=0}^{\ell-1} A^i, &&\text{if $\ell>0$,}\\
&-\sum_{i=1}^{-\ell} A^{-i}, &&\text{otherwise.}
\end{aligned}
\right.
\]
Note that this convention does not require $I-A$ to be invertible.

\subsection{Preparations for the proofs}\label{sec:pfp}
In this subsection, we prepare for the proofs of Theorems~\ref{thm:pretAl} and \ref{thm:SptBL}.
We first review the Casson--Gordon type metabelian representations for torus knots (see, e.g., \cite[Section~6.3]{MP} and \cite[Section~3]{CKP}).
We then verify that the representation used here satisfies conditions~(I), (II), and~(III) from Section~\ref{sec:twiBla} for any coprime integers $m,n \ge 2$.

We recall the Casson--Gordon type metabelian representation for $T(m,n)$ from \cite[Proposition~3.2]{CKP}.
This representation is commonly used in the definition of twisted Blanchfield pairings (see e.g. \cite{MP, CKP}).
Let $\bm{b}=(b_1,\dots,b_m)\in \Z_n^m$ be an $m$-tuple with $b_1+\cdots+b_m=0$, and assume $\bm{b}\neq 0$.
With respect to the presentation \eqref{eq:preXT} of $\pi_1(X_{T(m,n)})$, define
$
\rho(\bm{b}) \colon \pi_1(X_{T(m,n)}) \longrightarrow \mathrm{GL}_m(\Lambda_\C)
$
by
\[
\rho(\bm{b})(x)=
\begin{pmatrix}
0_{m-1,1} & I_{m-1} \\
t & 0_{1,m-1}
\end{pmatrix}^{\! n},
\qquad
\rho(\bm{b})(y)=
t \cdot \mathrm{diag}\!\bigl(e^{2\pi \sqrt{-1}\, b_1/n},\dots,e^{2\pi \sqrt{-1}\, b_m/n}\bigr).
\]
Here $I_{m-1}$ denotes the $(m-1)\times(m-1)$ identity matrix. Set
\[
X\coloneqq \rho(\bm{b})(x),\qquad
Y\coloneqq \rho(\bm{b})(y),\qquad
M\coloneqq \rho(\bm{b})(\mu).
\]
One checks that $M = X^s Y^r$ and $M^{mn} = Y^n = X^m = t^n I$.
By abuse of notation, we also write $\rho(\bm{b})$ for its composition with the inclusion $\mathrm{GL}_m(\Lambda_\C)\hookrightarrow \mathrm{GL}_m(\mathcal{O}(a))$.

We now show that, for any relatively prime positive integers $m$ and $n$, the representation $\rho(\bm{b})$ satisfies conditions (I), (II), and (III) of Section~\ref{sec:twiBla}.
First, condition (I) follows directly from the definition of $X$ and $Y$.
Next, we verify condition (II).
To this end, we describe the twisted chain complex of $X_{T(m,n)}$ from Theorem~\ref{thm:tiden}.
\begin{Proposition}\label{prop:twicell}
Let $R=\Lambda_\C$ or $\mathcal{O}(a)$. Then the chain complex
$C_*(X_{T(m,n)};R^m)$ twisted by $\rho(\bm{b})$ is chain homotopy equivalent to the chain complex
\[
0 \longrightarrow R^m
\xrightarrow{\,\partial_3\,} R^{m}\oplus R^{m}
\xrightarrow{\,\partial_2\,} R^{m}\oplus R^{m}
\xrightarrow{\,\partial_1\,} R^{m}
\longrightarrow 0,
\]
where the boundary maps $\partial_1$, $\partial_2$, and $\partial_3$ are presented by the block matrices
\[
\begin{aligned}
&\partial_1=\begin{pmatrix}
I - X^{-1} & I - Y^{-1}
\end{pmatrix}, \\
&\partial_2=\begin{pmatrix}
 t^{-n}\,\dfrac{I - X^{m}}{I - X}\,X &
 -\,X^{\,1-s}\,\dfrac{I - X^{s}}{I - X}\;\dfrac{I - M^{mn}}{I - M}\,M \\[12pt]
 -\,t^{-n}\,\dfrac{I - Y^{n}}{I - Y}\,Y &
 -\,Y^{\,1-r}\,\dfrac{I - Y^{r}}{I - Y}\,X^{-s}\,\dfrac{I - M^{mn}}{I - M}\,M \;+\; \dfrac{I - Y^{n}}{I - Y}\,Y
\end{pmatrix}, \\
&\partial_3=\begin{pmatrix}
I - X^{-s} \\[2pt]
t^{-n}\,(I - M^{-1})
\end{pmatrix}.
\end{aligned}
\]
Moreover, the cochain complex $C^*(X_{T(m,n)};R^m)$ twisted by $\rho(\bm{b})$ is chain homotopy equivalent to the chain complex
\begin{equation*}
0 
\xleftarrow{\quad} R^m
\xleftarrow{\,\,\, \partial_3^{\# \top} \,\,\, } R^{m} \oplus R^{m} 
\xleftarrow{\,\,\, \partial_2^{\# \top} \,\,\, } R^{m} \oplus R^{m} 
\xleftarrow{\,\,\, \partial_1^{\# \top} \,\,\, } R^{m} 
\xleftarrow{\,\,\, \quad} 0 .
\end{equation*}
For these presentations, the homomorphism $C^2(X_{T(m,n)}; R^m) \longrightarrow C_1(X_{T(m,n)}; R^m)$
presented by the matrix
\begin{equation*}
\begin{pmatrix}
 - X \dfrac{I - X^s}{I - X} & - t^{n} X \dfrac{I - X^{s}}{I - X} Y^r \\[15pt]
 0_{m,m} & - t^{n} \dfrac{I - Y^{r}}{I - Y} Y
\end{pmatrix}
\end{equation*}
induces the Poincar\'e duality isomorphism.
\end{Proposition}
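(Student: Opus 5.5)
The plan is to derive every part of the statement from the untwisted chain complex \eqref{eq:cellofuniv} and the diagonal approximation $D^\sharp$ attached to the taut identity of Theorem~\ref{thm:tiden}, exactly as was done for Proposition~\ref{pro:cellX1}. The only change is that the Fox derivatives are pushed forward under $\rho(\bm{b})$ instead of the abelianization. Since Theorem~\ref{thm:tiden} says that \eqref{eq:tiden} realizes $X_{T(m,n)}$, the complex \eqref{eq:cellofuniv} is chain homotopy equivalent to $C_*(\widetilde{X}_{T(m,n)})$ as free $\Z[\pi_1]$-complexes. Tensoring, $C_*(\widetilde{X})^{\mathrm{op}}\otimes_{\Z[\pi_1]} R^m$ is then chain homotopy equivalent to the twisted complex, and similarly for $\Hom_{\Z[\pi_1]}(-,R^m)$. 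So it suffices to compute the images of the boundary matrices.

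First I fix the convention. With the right action $v\cdot g=g^{-1}v$, an entry $w\in\Z[\pi_1]$ of a boundary matrix becomes a block obtained from $\rho(\bm{b})$. I expect either $\rho(\bar w)$ or the transpose of $\rho(w)$, depending on the row/column convention, and I will use whichever reproduces $\partial_1=(I-X^{-1}\ \ I-Y^{-1})$ from the entries $1-x$ and $1-y$. This convention then fixes all other blocks.

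Next I compute the Fox derivatives, using $\partial(u^k)/\partial u=\frac{1-u^k}{1-u}$ in the convention above.
\begin{itemize}
\item For $\gamma_1=x^my^{-n}$ this gives $\frac{1-x^m}{1-x}$ and $-x^my^{-n}\frac{1-y^n}{1-y}$.
\item For $\gamma_2=\mu^{-mn}y^n$, with $\mu=x^sy^r$, I apply the chain rule. This gives $\partial\mu/\partial x=\frac{1-x^s}{1-x}$ and $\partial\mu/\partial y=x^s\frac{1-y^r}{1-y}$, and these get multiplied by $\partial\mu^{-mn}/\partial\mu$.
\item For $\partial_3$, I apply $\psi$ to the Fox derivatives of \eqref{eq:tiden} in $\widetilde\gamma_1$ and $\widetilde\gamma_2$. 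These are $1-x^s$ and $\mu^{mn}(1-\mu)$.
\end{itemize}
After applying the convention, I simplify the resulting blocks using the relations $M=X^sY^r$ and $M^{mn}=X^m=Y^n=t^nI$. This last relation is what turns the prefactors $X^mY^{-n}$ into $I$ and $M^{\pm mn}$ into $t^{\pm n}$. Throughout I must never divide by $I-X$ etc., only use the polynomial identities such as $\frac{I-A^{-k}}{I-A}=-A^{-k}\frac{I-A^{k}}{I-A}$.

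The cochain statement then follows formally. Using condition~(I), $\rho(g^{-1})=\rho(g)^{\#\top}$, the identification $\Hom_{\Z[\pi_1]}(\Z[\pi_1]^k,R^m)\cong R^{mk}$ turns the coboundaries into $\partial_i^{\#\top}$.

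For Poincaré duality I will use the formula for $D^\sharp(h^{(3)}_1)$ with $\omega_1=x^s$, $\omega_2=1$, $\omega_3=\mu^{mn}$, $\omega_4=\mu^{mn+1}$ and signs $-,+,+,-$. The terms with $\omega_2$ vanish. The cap product with the fundamental class $h^{(3)}_1\otimes 1$ sends a cochain $\phi$ to $\sum\varepsilon_i[\partial\omega_i/\partial x_\ell]\,h^{(1)}_\ell\otimes\phi([\omega_i]h^{(2)}_{j_i})$. Its matrix has:
\begin{itemize}
\item a contribution $-\frac{1-x^s}{1-x}\,x^s$ (pushed through $\rho$) in the $(1,1)$ block;
\item contributions in column $2$ from the difference of the $\omega_3$ and $\omega_4$ terms, namely $\frac{\partial\mu^{mn}}{\partial x_\ell}\mu^{mn}-\frac{\partial\mu^{mn+1}}{\partial x_\ell}\mu^{mn+1}$.
\end{itemize}
Expanding $\partial\mu^{mn+1}=\partial\mu^{mn}+\mu^{mn}\partial\mu$ makes most terms cancel, leaving $-\mu^{mn}\frac{\partial\mu}{\partial x_\ell}\mu^{mn+1}$-type terms. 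Together with $M^{mn}=t^nI$, these yield the blocks $-t^nX\frac{I-X^s}{I-X}Y^r$, $0$ and $-t^n\frac{I-Y^r}{I-Y}Y$, up to the same convention.

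I expect the main obstacle to be this bookkeeping of conventions: left versus right action, inverse and transpose under $\rho$, and which side group elements act on in the cap product. These choices must yield exactly the stated matrices, including the surprising extra summand $\frac{I-Y^n}{I-Y}Y$ in the $(2,2)$ entry of $\partial_2$. That summand comes from $\partial(y^n)/\partial y$ in $\gamma_2$, pushed through with a prefactor $M^{-mn}=t^{-n}$; this should make the term $t^{-n}\cdot t^{n}$, so it reduces to a consistency check. As a sanity check, specializing $X\mapsto t^n$, $Y\mapsto t^m$, $m\mapsto1$ should recover Proposition~\ref{pro:cellX1}, up to the different basepoint normalizations. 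If a sign or power of $t$ disagrees, I would absorb it by a change of basis of $C_2$ or $C_1$ by units, which does not affect the chain homotopy type.
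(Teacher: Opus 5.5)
Your proposal is correct and takes the same route as the paper. The paper obtains these matrices by the direct Fox-calculus computation of Section~\ref{sec:fox} applied to the taut identity \eqref{eq:tiden} and Trotter's diagonal $D^\sharp$, pushed through $\rho(\bm{b})$ instead of the abelianization.

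A few points on the bookkeeping you flagged. The convention coming from $v\cdot g=g^{-1}v$ is $w\mapsto\rho(\bar w)$, where $\bar w$ is the image of $w$ under the anti-involution $g\mapsto g^{-1}$, so the order of factors reverses. In the cap product, $a\,h^{(1)}_\ell\otimes b\,h^{(2)}_j$ contributes $\rho(\bar a\,b)$. The $\omega_3,\omega_4$ terms do not simply cancel as you describe. They telescope via $\overline{\tfrac{1-\mu^{mn}}{1-\mu}}\,\mu^{mn}(1-\mu)=\mu-\mu^{mn+1}$ to $-\rho\bigl(\overline{\partial\mu/\partial x_\ell}\bigr)M^{mn+1}$. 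With these conventions every stated block comes out exactly. So your fallback of absorbing discrepancies by a change of basis is never needed, and it would not be legitimate anyway, since it would also change the stated Poincar\'e duality matrix.
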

\noindent
Recall that $Q(\Lambda_\C)$ denotes the field of fractions of $\Lambda_\C$.
\begin{Proposition}
For all $i\in\Z$ we have $H_i\!\left(X_{T(m,n)}; Q(\Lambda_\C)^m\right)=0$.
Equivalently, $\rho(\bm{b})$ satisfies condition~\textnormal{(II)} from Section~\ref{sec:twiBla}.
\end{Proposition}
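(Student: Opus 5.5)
We use the small complex of Proposition~\ref{prop:twicell}, tensored up to the field $F\coloneqq Q(\Lambda_\C)$. Because $F$ is flat over $\Lambda_\C$, the complex $C_*(X_{T(m,n)};F^m)$ is chain homotopy equivalent to
\[
0\to F^m\xrightarrow{\partial_3}F^{2m}\xrightarrow{\partial_2}F^{2m}\xrightarrow{\partial_1}F^m\to 0 .
\]
This is a complex of finite-dimensional $F$-vector spaces whose Euler characteristic is $m-2m+2m-m=0$. Hence it suffices to show $H_0=0$, $H_3=0$ and $H_1=0$; the vanishing of $H_2$ then follows from the Euler characteristic.

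For $H_0$ and $H_3$ we show that $\partial_1$ is surjective and $\partial_3$ is injective.
\begin{itemize}
\item Since $Y=t\cdot\mathrm{diag}(\zeta^{b_1},\dots,\zeta^{b_m})$ with $\zeta=e^{2\pi\sqrt{-1}/n}$, we have $\det(I-Y^{-1})=\prod_j(1-t^{-1}\zeta^{-b_j})\neq 0$ in $\Lambda_\C$. So $I-Y^{-1}$ is already invertible over $F$, and $\partial_1=(I-X^{-1}\ \ I-Y^{-1})$ is surjective.
\item Similarly, $I-X^{-s}$ is invertible over $F$. The matrix $X$ is the $n$-th power of the companion-type matrix $C$ with $C^m=tI$. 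The eigenvalues of $C$ are the $m$-th roots of $t$, so those of $X^{-s}$ are $t^{-ns/m}\omega$ with $\omega$ a root of unity, and none of these equals $1$ since $ns\neq 0$. More concretely, $\det(I-X^{-s})$ is a nonzero Laurent polynomial: up to a unit it is a power of $(1-t^{-ns/d}\cdot\text{const})$, which can be read off from $\det(\lambda I-C)=\lambda^m-t$. Hence $\partial_3$ is injective.
\end{itemize}
This disposes of $H_0$ and $H_3$.

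For $H_1$ there are two possible routes.

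The first is to show directly that $\operatorname{rank}\partial_2=m$, which forces $\ker\partial_1=\operatorname{Im}\partial_2$. Since $I-Y^{-1}$ is invertible, $\ker\partial_1$ is the graph $\{(u,-(I-Y^{-1})^{-1}(I-X^{-1})u)\}$, of dimension $m$. So we need some $m\times m$ minor of $\partial_2$ to be nonzero over $F$. The natural candidate is the first column block
\[
\begin{pmatrix} t^{-n}\frac{I-X^m}{I-X}X\\ -t^{-n}\frac{I-Y^n}{I-Y}Y\end{pmatrix}.
\]
Its lower block $\frac{I-Y^n}{I-Y}$ is diagonal with entries $\sum_{i<n}(t\zeta^{b_j})^i$. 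These are nonzero because $(t\zeta^{b_j})^n=t^n$, so each entry equals $(1-t^n)/(1-t\zeta^{b_j})\neq 0$. Thus this column block alone has rank $m$, and hence $\operatorname{rank}\partial_2\geq m$.

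The second route, as a backup, is duality. Condition~(I) and the Poincar\'e duality isomorphism of Proposition~\ref{prop:twicell} give $H_1(X;F^m)\cong H^2(X;F^m)$. By universal coefficients over the field $F$, the latter is dual to $H_2(X;F^m)$. Combining this with $\chi=0$ gives $2\dim H_1=0$.

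The first route is cleaner, and the key check is that $\partial_1\partial_2=0$ together with the rank count. Every kernel of $\partial_1$ has dimension $m$ and the rank of $\partial_2$ is at least $m$, so equality holds and $H_1=0$. Then $\dim H_2=0$ by the Euler characteristic, and all $H_i$ vanish.

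The step where I expect subtleties is the genericity of the determinants. For example, $\det(I-X^{-s})$ could vanish identically only if an eigenvalue were identically $1$, which cannot happen since those eigenvalues are nonconstant functions of $t$. Each of these checks amounts to evaluating a diagonal or companion-matrix determinant, which is routine.
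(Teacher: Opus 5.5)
Your first route is correct and is essentially the paper's proof via Lemma~\ref{lem:rank}: $\partial_1$ and $\partial_3$ have full rank $m$ because they contain an invertible block (you use $I-Y^{-1}$ where the paper uses $I-X^{-1}$), the invertible diagonal $(2,1)$-block of $\partial_2$ gives $\operatorname{rank}\partial_2\ge m$, and a dimension count finishes; your Euler-characteristic step for $H_2$ simply replaces the paper's bound $\operatorname{rank}\partial_2\le 2m-\operatorname{rank}\partial_3$. Drop the backup duality route, though: once $H_0=H_3=0$, the Euler characteristic only gives $\dim H_2-\dim H_1=0$, which is exactly what duality already says, so it cannot produce $2\dim H_1=0$.
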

\begin{proof}
The proposition follows from Lemma~\ref{lem:rank} below.
\end{proof}
\begin{Lemma}\label{lem:rank}
For the matrices in Proposition~\ref{prop:twicell}, we have
\[
\operatorname{rank}_{\Lambda_\C}(\partial_1)
=
\operatorname{rank}_{\Lambda_\C}(\partial_2)
=
\operatorname{rank}_{\Lambda_\C}(\partial_3)
= m.
\]
\end{Lemma}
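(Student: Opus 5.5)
The plan is to bound each rank from above by the chain complex relations $\partial_1\partial_2=0$ and $\partial_2\partial_3=0$, and from below by exhibiting explicit invertible $m\times m$ minors over $Q(\Lambda_\C)$. Since $\partial_3\colon R^m\to R^{2m}$ and $\partial_1\colon R^{2m}\to R^m$, we have $\operatorname{rank}\partial_3\le m$ and $\operatorname{rank}\partial_1\le m$. The relations then give $\operatorname{rank}\partial_2\le 2m-\operatorname{rank}\partial_1$ and $\operatorname{rank}\partial_2\ge\operatorname{rank}\partial_3$. So it suffices to prove $\operatorname{rank}\partial_1=m$ and $\operatorname{rank}\partial_3=m$. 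Then $m\le\operatorname{rank}\partial_2\le 2m-m=m$.

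For $\partial_1=(I-X^{-1}\;\; I-Y^{-1})$, I would show that the block $I-Y^{-1}$ is invertible over $Q(\Lambda_\C)$. Since $Y=t\,\mathrm{diag}(\zeta^{b_1},\dots,\zeta^{b_m})$ with $\zeta=e^{2\pi\sqrt{-1}/n}$, we get
\[
\det(I-Y^{-1})=\prod_{j=1}^m\bigl(1-t^{-1}\zeta^{-b_j}\bigr)\neq 0
\]
in $\Lambda_\C$. Hence $\partial_1$ contains an $m\times m$ minor that is nonzero, and $\operatorname{rank}\partial_1=m$.

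For $\partial_3$, I would use the lower block $t^{-n}(I-M^{-1})$ and show $\det(I-M^{-1})\neq 0$. Since $M^{mn}=t^nI$, every eigenvalue $\lambda$ of $M$ over an algebraic closure of $Q(\Lambda_\C)$ satisfies $\lambda^{mn}=t^n$. The value $\lambda=1$ would force $t^n=1$, which is false in $Q(\Lambda_\C)$. Therefore $1$ is not an eigenvalue of $M$, so $I-M^{-1}$ is invertible and $\operatorname{rank}\partial_3=m$. As a fallback, the same conclusion follows from the block $I-X^{-s}$: the eigenvalues of $X$ satisfy $\lambda^m=t^n$, so $X^{-s}$ cannot have eigenvalue $1$.

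The only step that could need care is confirming $\partial_1\partial_2=0$ and $\partial_2\partial_3=0$ for the explicit matrices. These relations follow from Proposition~\ref{prop:twicell}, since the displayed complex is chain homotopy equivalent to $C_*(X_{T(m,n)};R^m)$ and is itself a chain complex. With the rank bounds in hand, condition~(II) follows. The complex tensored with $Q(\Lambda_\C)$ has dimensions $m,2m,2m,m$ with ranks $m,m,m$, so every homology group vanishes.
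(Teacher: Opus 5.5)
Your treatment of $\partial_1$ and $\partial_3$ is correct, and so is the upper bound from $\partial_1\partial_2=0$. Using the blocks $I-Y^{-1}$ and $I-M^{-1}$ instead of the paper's $I-X^{-1}$ and $I-X^{-s}$ is an inessential variation.

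The gap is the lower bound for $\partial_2$. The relation $\partial_2\partial_3=0$ does \emph{not} give $\operatorname{rank}\partial_2\ge\operatorname{rank}\partial_3$. It only says $\Ima\partial_3\subseteq\Ker\partial_2$, i.e.\ $\operatorname{rank}\partial_2\le 2m-\operatorname{rank}\partial_3$. That is a second \emph{upper} bound, and it is exactly the one the paper uses. For instance, $\partial_2=0$ is compatible with any $\partial_3$.

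The inequality you want would follow from exactness at $C_2$ over $Q(\Lambda_\C)$. But vanishing of that homology is precisely what this lemma is meant to establish for condition~(II), so it cannot be assumed. The Euler characteristic does not rescue you either. Once $\operatorname{rank}\partial_1=\operatorname{rank}\partial_3=m$, one has $\dim H_1=\dim H_2=m-\operatorname{rank}\partial_2$ for every possible value of $\operatorname{rank}\partial_2$, so nothing forces it to equal $m$.

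The lower bound $\operatorname{rank}\partial_2\ge m$ is the real content of the lemma. It requires exhibiting a nonsingular $m\times m$ submatrix of $\partial_2$ directly. The paper uses the $(2,1)$-block $-t^{-n}\frac{I-Y^n}{I-Y}Y$. This block is diagonal, with $i$-th entry $-t^{1-n}e^{2\pi\sqrt{-1}b_i/n}\,\frac{1-t^n}{1-e^{2\pi\sqrt{-1}b_i/n}t}$, which is a nonzero element of $\Lambda_\C$. The $(1,1)$-block would also do, since $\det\frac{I-X^m}{I-X}=(1-t^n)^m/\det(I-X)=(1-t^n)^{m-1}\neq 0$.

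With such a minor in hand, your sandwich $m\le\operatorname{rank}\partial_2\le 2m-\operatorname{rank}\partial_1=m$ goes through.
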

\begin{proof}
Since $\Lambda_\C$ is an integral domain, for all $i \in \Z$, we have
$
\operatorname{rank}_{\Lambda_\C}(\partial_i)=\operatorname{rank}_{Q(\Lambda_\C)}(\partial_i).
$
Thus it suffices to compute the ranks over the field of fractions $Q(\Lambda_\C)$ of $\Lambda_\C$.

$\det(I-X^{-1})=1-t^{-n}$ and $\det(I-X^{-s})=1-t^{-ns}$, so both $I-X^{-1}$ and $I-X^{-s}$ are invertible over $Q(\Lambda_\C)$.
Therefore $\operatorname{rank}_{Q(\Lambda_\C)}(\partial_1)=\operatorname{rank}_{Q(\Lambda_\C)}(\partial_3)=m$.
From the chain complex in Proposition~\ref{prop:twicell}, we obtain $\mathrm{rank}_{Q(\Lambda_\C)} \partial_2 \leq 2m -  \operatorname{rank}_{Q(\Lambda_\C)}(\partial_3) = m.$
On the other hand, the $(2,1)$-block of $\partial_2$ is diagonal and every diagonal entry is nonzero. 
Hence
$
m \;\le\; \operatorname{rank}_{Q(\Lambda_\C)}(\partial_2),
$
and therefore $\operatorname{rank}_{Q(\Lambda_\C)}(\partial_2)=m$.
\end{proof}
\noindent
Finally, we verify condition \textnormal{(III)}.
In this setting, we have the following equalities of orders:
\begin{equation}\label{eq:otherord}
\begin{aligned}
\mathrm{Ord}_{\Lambda_\C} H^1(X_{T(m,n)}; \Lambda_\C^m)
&=
\mathrm{Ord}_{\Lambda_\C} H_2(X_{T(m,n)}; \Lambda_\C^m) \\
&=
\bigl( \mathrm{Ord}_{\Lambda_\C} H_0(X_{T(m,n)}; \Lambda_\C^m) \bigr)^\# \\
&= 1 .
\end{aligned}
\end{equation}
The first equality follows from Poincar\'e duality. The second equality is given by \cite[Prop.~3.7]{FV}.
The third equality is shown by \cite[Lemma~4.1]{BCP2}.
Hence $H^1\!\left(X_{T(m,n)}; \Lambda_\C^m\right)=0$, and therefore $\rho(\bm{b})$ satisfies condition~\textnormal{(III)}.

\subsection{Proof of Theorem \ref{thm:pretAl}}\label{sec:pretAl}
In this section, we prove Theorem~\ref{thm:pretAl}.
Our approach is to explicitly describe bases of $\mathrm{Ker} \partial_1$ and $\mathrm{Im} \partial_2$ in the complex of Proposition~\ref{prop:twicell} (see Propositions~\ref{prop:kernel} and \ref{prop:image}).
Throughout this subsection, unless explicitly stated otherwise, all modules and homomorphisms are over $\mathcal{O}(a)$.

We begin by defining the matrices $P$, $V$, and $W$, which we use to describe bases of $\ker(\partial_1)$ and $\operatorname{im}(\partial_2)$.
The matrix $P$ is the diagonal matrix whose $(i,i)$-entry is
\[
\begin{cases}
1, & \text{if } a \neq -b_i,\\
0, & \text{otherwise.}
\end{cases}
\]
We then define $V$ and $W$ by
\begin{equation*}
V \coloneqq (I - P Y^{-1})^{-1},
\qquad
W \coloneqq (I - (I-P)X^{-1})^{-1}.
\end{equation*}
Note that
$\det (I - (I-P)X^{-1}) = 1 \in \mathcal{O}(a)^{\times}$, and
$\det (I - P Y^{-1})$ is the product of $1 - e^{-2 \pi \sqrt{-1} b_i / n} t^{-1}$ over all $i \in \{1,2, \cdots, m\}$ with $a \neq -b_i$, and hence it lies in $\mathcal{O}(a)^{\times}$.
\begin{Lemma}\label{lem:propmat} 
\noindent
(i) The matrices $Y$, $P$, $\dfrac{I - Y^n}{I - Y}$, and $V$ commute.

\medskip
\noindent
(ii) $P$ is a projection matrix, i.e.
\[
P^2=P,\qquad (I-P)^2=I-P,\qquad P(I-P)=(I-P)P=0_{m,m}.
\]
Moreover, $\ker(I-P)=\operatorname{Im}(P)$.

\medskip
\noindent
(iii) The matrix $V$ satisfies the following two equalities:
\[
(I-Y^{-1})\,V \;=\; I-(I-P)Y^{-1}, \qquad (I-P)=(I-P)\,V .
\]

\medskip
\noindent
(iv)
We have $\dfrac{1 - t^n}{1 - e^{-2 \pi \sqrt{-1} a/n} t}
= \prod_{j \in \Z_n \setminus \{ a \}} (1 - e^{-2 \pi \sqrt{-1} j/n} t)
 \in \mathcal{O}(a)^\times$.
In particular,
\begin{equation*}
\dfrac{1 - e^{-2 \pi \sqrt{-1} a/n} t}{1 - t^n} \,
\dfrac{I - Y^{n}}{I - Y} \,
(I-P)
= (I-P).
\end{equation*}

\medskip
\noindent
(v) The matrix $W$ satisfies the following three equalities:
\[
(I-P)(I-X^{-1})\,W=(I-P),\qquad PW=P,\qquad
\frac{I-X^m}{I-X}\,X\,P \;=\; W\,P\,\frac{I-X^m}{I-X}\,X\,P .
\]
\end{Lemma}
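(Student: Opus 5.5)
Most of the items reduce to entrywise statements about diagonal matrices, so I would work in the standard basis, where $Y$ and $P$ are diagonal, and handle each part in turn. For (i), $Y=t\,\mathrm{diag}(\zeta^{b_1},\dots,\zeta^{b_m})$ with $\zeta=e^{2\pi\sqrt{-1}/n}$ is diagonal, and so is $P$. Hence $\frac{I-Y^n}{I-Y}=\sum_{i=0}^{n-1}Y^i$ is diagonal, and $V=(I-PY^{-1})^{-1}$ is the inverse of a diagonal matrix, hence diagonal. Diagonal matrices commute. For (ii), $P$ is diagonal with entries in $\{0,1\}$, which gives all the identities at once. Both $\ker(I-P)$ and $\operatorname{Im}(P)$ are spanned by the standard basis vectors $e_i$ with $a\neq -b_i$.

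For (iii), I would multiply the first claimed identity on the right by $V^{-1}=I-PY^{-1}$. It then reads $I-Y^{-1}=(I-(I-P)Y^{-1})(I-PY^{-1})$. Expanding the right side gives $I-PY^{-1}-(I-P)Y^{-1}+(I-P)PY^{-2}=I-Y^{-1}$, using $(I-P)P=0$ and the commutativity from (i). For the second identity, note $(I-P)V^{-1}=(I-P)-(I-P)PY^{-1}=I-P$, and then multiply on the right by $V$.

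For (iv), factor $1-t^n=\prod_{j\in\Z_n}(1-\zeta^{-j}t)$ and cancel the factor $j=a$. Each remaining factor $1-\zeta^{-j}t$ with $j\neq a$ does not vanish at $t=\zeta^{a}$, so it is a unit in $\mathcal{O}(a)$. For the matrix identity, only the diagonal entries $i$ with $a=-b_i$ survive multiplication by $I-P$. There the entry of $Y$ is $y=\zeta^{-a}t$, so the entry of $\frac{I-Y^n}{I-Y}$ is $\frac{1-y^n}{1-y}=\frac{1-t^n}{1-\zeta^{-a}t}$, because $y^n=t^n$. This cancels exactly with the scalar prefactor, giving entry $1$.

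Part (v) is the only place where the non-diagonal matrix $X$ enters, and I expect its third identity to be the main obstacle. I would first note that $W^{-1}=I-(I-P)X^{-1}$, which gives $PW^{-1}=P-P(I-P)X^{-1}=P$ and hence $PW=P$. For the first identity, compute
\[
(I-P)(I-X^{-1})=(I-P)-(I-P)X^{-1}.
\]
I want this to equal $(I-P)W^{-1}=(I-P)-(I-P)^2X^{-1}$, and the two agree because $(I-P)^2=I-P$. Multiplying on the right by $W$ then gives the first identity.

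For the third identity, set $Z=\frac{I-X^m}{I-X}XP$. It suffices to show $W^{-1}Z=PZ$, since then $Z=W(PZ)=WPZ$. Because $W^{-1}Z=Z-(I-P)X^{-1}Z$, the condition $W^{-1}Z=PZ$ is equivalent to $(I-P)Z=(I-P)X^{-1}Z$, that is, $(I-P)(I-X^{-1})Z=0$. Now $X$ commutes with $\frac{I-X^m}{I-X}$, so
\[
(I-X^{-1})\frac{I-X^m}{I-X}X=(I-X)\frac{I-X^m}{I-X}=I-X^m=(1-t^n)I,
\]
using $X^m=t^nI$. Thus $(I-P)(I-X^{-1})Z=(1-t^n)(I-P)P=0$, which finishes (v). The points to check carefully are the commutation of $X$ with $\frac{I-X^m}{I-X}$ (a polynomial in $X$) and the convention defining $\frac{I-X^m}{I-X}$ as a finite sum, which makes the cancellation $(I-X)\frac{I-X^m}{I-X}=I-X^m$ valid even though $I-X$ need not be invertible.
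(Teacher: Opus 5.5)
Your proof is correct and follows the paper's route. Parts (i)--(iv) are the same entrywise diagonal computations, including the factorization through $V^{-1}=I-PY^{-1}$ in (iii). In (v) you use the same manipulations of $W^{-1}=I-(I-P)X^{-1}$ and reduce the third identity to $W^{-1}Z=PZ$, which you justify more explicitly than the paper does.

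There is one harmless sign slip. Since $(I-X^{-1})X=X-I$, your last display should read $(I-X^{-1})\frac{I-X^m}{I-X}X=-(I-X^m)=(t^n-1)I$. Only the fact that this is a scalar multiple of $I$ is used, so $(I-P)(I-X^{-1})Z=0$ still holds.
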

\begin{proof}
\noindent
(i) Since $Y$ and $P$ are diagonal, all of $Y$, $P$, $\dfrac{I-Y^n}{I-Y}$, and $V$ are diagonal. 
Hence they commute.

\medskip
\noindent
(ii) This follows from a direct computation.

\medskip
\noindent
(iii) A straightforward calculation shows 
\[
\left\{
\begin{aligned}
&I - Y^{-1} = (I - (I-P)Y^{-1})(I - P Y^{-1}), \\
&I - P = (I-P)(I - P Y^{-1}).
\end{aligned}
\right.
\]
Right-multiplying these equalities by $V=(I-PY^{-1})^{-1}$ yields the desired equalities.

\medskip
\noindent
(iv) The matrix $\dfrac{I - Y^{n}}{I - Y} (I-P)$ is the diagonal matrix whose $(i,i)$-entry is $0$ if $a \neq -b_i$, and $\dfrac{1 - t^n}{1 - e^{-2 \pi \sqrt{-1} a/n} t}$ otherwise. Hence the equality follows.

\medskip
\noindent
(v) Since $W = (I - (I-P)X^{-1})^{-1}$ and $I - (I-P)X^{-1} = (I-P)(I - X^{-1}) + P$, we obtain
$(I-P)(I - X^{-1}) W + P W = I$.
Left-multiplying this equality by $I-P$ and by $P$ gives the first and second identities, respectively.
The third equality follows by left-multiplying by $W$ in the following equality
\[
(I - (I-P)X^{-1}) \, \dfrac{I - X^m}{I - X} X P
=
P \dfrac{I - X^m}{I - X} X P.
\]
\end{proof}
To compute $H_1\!\left(X_{T(m,n)}; \mathcal{O}(a)^m\right)$, we determine $\Ker(\partial_1)$ and $\Ima(\partial_2)$ in the chain complex of Proposition~\ref{prop:twicell}.
We begin with a detailed description of $\Ker(\partial_1)$.

\begin{Proposition}\label{prop:kernel}
A basis for $\Ker(\partial_1)$ is given by the columns of
\begin{equation*}
\begin{pmatrix}
 t^{-n} \dfrac{I - X^{m}}{I - X} X\\[15pt]
 -t^{-n} \dfrac{I - Y^{n}}{I - Y} Y
\end{pmatrix} (I-P)
+
\begin{pmatrix}
 W\\[6pt]
 -V (I - X^{-1}) W
\end{pmatrix} P.
\end{equation*}
\end{Proposition}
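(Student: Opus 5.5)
Call the displayed $2m\times m$ matrix $K$. The plan has three steps: show that the columns of $K$ lie in $\Ker(\partial_1)$, show that $\Ker(\partial_1)$ is free of rank $m$, and show that the columns of $K$ generate it. Everything is over $\mathcal{O}(a)$, a local PID.

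\textbf{Step 1: $\partial_1 K=0$.} Write $K=K_1(I-P)+K_2P$. The first summand is the first block column of $\partial_2$ from Proposition~\ref{prop:twicell}, restricted by $(I-P)$. Since $\partial_1\partial_2=0$, we get $\partial_1K_1=0$. For the second summand we need $(I-X^{-1})W-(I-Y^{-1})V(I-X^{-1})W$ to vanish after right multiplication by $P$. By Lemma~\ref{lem:propmat}(iii), $(I-Y^{-1})V=I-(I-P)Y^{-1}$. The expression therefore reduces to
\[
(I-P)Y^{-1}(I-X^{-1})WP .
\]
By Lemma~\ref{lem:propmat}(i), $Y^{-1}$ commutes with $I-P$, so this equals $Y^{-1}(I-P)(I-X^{-1})WP$. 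By Lemma~\ref{lem:propmat}(v), $(I-P)(I-X^{-1})W=I-P$, and $(I-P)P=0$ by (ii). Hence $\partial_1K=0$.

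\textbf{Step 2: rank.} The map $\partial_1$ is onto its image of rank $m$ (Lemma~\ref{lem:rank}), and $\Ker(\partial_1)$ is a direct summand of the free module $\mathcal{O}(a)^{2m}$. Indeed, $I-X^{-1}$ has determinant $1-t^{-n}$, which vanishes at $e^{2\pi\sqrt{-1}a/n}$ and so is not a unit. Instead I will use $\det(I-Y^{-1})$ or a column argument. Since $\mathcal{O}(a)$ is a PID, the kernel is free of rank $2m-m=m$. So it suffices to show that the $m$ columns of $K$ generate the kernel. A convenient criterion is that $K$ followed by a suitable projection is invertible.

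\textbf{Step 3: generation (the main obstacle).} Take $(u,v)\in\Ker(\partial_1)$, so $(I-X^{-1})u=-(I-Y^{-1})v$. Decompose $v=(I-P)v+Pv$. On the $P$-part, $I-PY^{-1}$ is invertible, so the $P$-coordinates of $v$ are determined by $u$. Use $V$ and Lemma~\ref{lem:propmat}(iii) to write $Pv=-PV(I-X^{-1})u$.

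On the $(I-P)$-part, $I-Y^{-1}$ has diagonal entries $1-e^{-2\pi\sqrt{-1}b_i/n}t^{-1}$ with $a=-b_i$. These have a simple zero at our point, so $(I-P)(I-X^{-1})u$ must be divisible by them. I would set $u=WPu'+\frac{I-X^m}{I-X}X(I-P)w\,t^{-n}$ and solve for $u'$ and $w$, using $PW=P$ and the third identity of Lemma~\ref{lem:propmat}(v) to separate the components.

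Lemma~\ref{lem:propmat}(iv) shows that the $(I-P)$-block $-t^{-n}\frac{I-Y^n}{I-Y}Y(I-P)$ is, up to a unit, exactly the factor $(1-e^{-2\pi\sqrt{-1}a/n}t)$ on the relevant coordinates. This matches the divisibility constraint, so $w$ is solvable over $\mathcal{O}(a)$.

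The cleanest way to finish is a determinant computation. Choose an $m\times 2m$ matrix $L$ such that $LK$ is block triangular, for instance $L=\bigl(P\ \ (I-P)\bigr)$ composed with suitable units. Its determinant should be a unit in $\mathcal{O}(a)$ times $\det$ of the restriction. Show that $\operatorname{coker}$ of $K$ into $\Ker\partial_1$ vanishes by comparing this determinant with the gcd of the maximal minors of $\partial_1$. This bookkeeping is where I expect the real work to lie.
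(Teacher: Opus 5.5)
\textbf{Verdict: genuine gap.} Your Step 1 is correct, and it is the same check the paper makes after its change of basis $E$. The conclusion of Step 2, that $\Ker(\partial_1)$ is free of rank $m$, follows from $\mathcal{O}(a)$ being a PID. The aside about $\det(I-X^{-1})$ and $\det(I-Y^{-1})$ plays no role; note that $\det(I-Y^{-1})$ is not a unit either once $P\neq I$.

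The gap is Step 3. Generation is never proved (you defer ``the real work''), and the one concrete claim you make toward it is false. Lemma~\ref{lem:propmat}(iv) does not say that $-t^{-n}\frac{I-Y^n}{I-Y}Y(I-P)$ is, up to a unit, the factor $1-e^{-2\pi\sqrt{-1}a/n}t$. It says the opposite: on each coordinate with $b_i=-a$, the entry of $\frac{I-Y^n}{I-Y}$ is $\frac{1-t^n}{1-e^{-2\pi\sqrt{-1}a/n}t}$, which is a \emph{unit} of $\mathcal{O}(a)$. The simple zero at $e^{2\pi\sqrt{-1}a/n}$ sits in $I-Y^{-1}$, i.e.\ in $\partial_1$. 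The identity $(I-Y^{-1})\frac{I-Y^n}{I-Y}Y=-(1-t^n)I$ shows that this zero is entirely absorbed there.

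With your reading, $\det(LK)$ would be a non-unit, and you would be pushed into the unspecified comparison with the minors of $\partial_1$. That is exactly where your plan stalls. The paper uses (iv) in the unit sense. After conjugating by $E$, it shows $Pv_2=0$. It then subtracts a multiple of the first block to kill $(I-P)v_2$, which is possible only because that block is invertible on $\Ima(I-P)$. It concludes with Lemma~\ref{lem:propW} and the rank count.

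Once (iv) is read correctly, your own choice $L=\bigl(P\ \ I-P\bigr)$ finishes the argument at once. Use $PW=P$, $(I-P)V=I-P$ and $(I-P)(I-X^{-1})W=I-P$ from Lemma~\ref{lem:propmat}(iii),(v). Then
\[
LK \;=\; \Bigl(P-t^{-n}\frac{I-Y^n}{I-Y}Y(I-P)\Bigr)\;+\;t^{-n}P\,\frac{I-X^m}{I-X}X\,(I-P)\;=\;D+N .
\]
Here $D$ is diagonal with unit entries by (iv), and $N=PN(I-P)$. Since $D$ commutes with $P$, we get $(D^{-1}N)^2=0$, so $LK=D(I+D^{-1}N)$ is invertible.

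It follows that $K$ is injective and $\Ima K$ is a rank-$m$ direct summand of $\mathcal{O}(a)^{2m}$ lying inside $\Ker(\partial_1)$. The quotient $\Ker(\partial_1)/\Ima K$ is then torsion, because the ranks are equal. It also embeds in the torsion-free module $\mathcal{O}(a)^{2m}/\Ima K$, so it vanishes. No comparison with the minors of $\partial_1$ is needed, and this route even bypasses $E$ and Lemma~\ref{lem:propW}. As submitted, however, none of this computation appears in your proposal.
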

\begin{proof}
Let $E$ denote the invertible $\mathcal{O}(a)$-matrix
\[
E \coloneqq
\begin{pmatrix}
I & 0_{m,m} \\
- V (I - X^{-1}) & V
\end{pmatrix}.
\]
One checks that
\begin{equation*}
\begin{aligned}
\begin{pmatrix}
 t^{-n} \dfrac{I - X^{m}}{I - X} X\\[15pt]
 -t^{-n} \dfrac{I - Y^{n}}{I - Y} Y
\end{pmatrix} (I-P)
&=
E^{-1}
\begin{pmatrix}
 t^{-n} \dfrac{I - X^{m}}{I - X} X\\[15pt]
 -t^{-n} \dfrac{I - Y^{n}}{I - Y}
\end{pmatrix} (I-P), \\[10pt]
\begin{pmatrix}
 W\\[6pt]
 -V (I - X^{-1}) W
\end{pmatrix} P
&=
E^{-1}
\begin{pmatrix}
 W\\[6pt]
 0_{m,m}
\end{pmatrix} P .
\end{aligned}
\end{equation*}
Since $E$ is invertible, $\Ker (\partial_1) = E^{-1} \Ker (E \partial_1)$.
Therefore, the claim follows from Lemma~\ref{lem:Edel} below.
\end{proof}

\begin{Lemma}\label{lem:Edel}
A basis for $\Ker(\partial_1 E)$ is given by the columns of
\begin{equation}\label{eq:kerE}
\begin{pmatrix}
 t^{-n} \dfrac{I - X^{m}}{I - X} X\\[15pt]
 -t^{-n} \dfrac{I - Y^{n}}{I - Y}
\end{pmatrix} (I-P)
+
\begin{pmatrix}
 W\\[6pt]
 0_{m,m}
\end{pmatrix} P .
\end{equation}
\end{Lemma}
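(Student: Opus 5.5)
The plan is to compute $\partial_1 E$ explicitly, solve $\partial_1 E\,\bm{u}=0$ directly over the local ring $\mathcal{O}(a)$, and then match the resulting basis with \eqref{eq:kerE} by an invertible change of basis.

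\textbf{Step 1: simplify $\partial_1 E$.} We have
\[
\partial_1 E=\bigl((I-X^{-1})-(I-Y^{-1})V(I-X^{-1}),\ (I-Y^{-1})V\bigr).
\]
By Lemma~\ref{lem:propmat}(iii), $(I-Y^{-1})V=I-(I-P)Y^{-1}$. Hence
\[
\partial_1E=\bigl((I-P)Y^{-1}(I-X^{-1}),\ I-(I-P)Y^{-1}\bigr).
\]
The matrix $D\coloneqq I-(I-P)Y^{-1}$ is diagonal. Its $i$-th entry is $1$ if $a\ne -b_i$, and $1-e^{2\pi\sqrt{-1}a/n}t^{-1}$ if $a=-b_i$. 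The latter entry vanishes at $t=e^{2\pi\sqrt{-1}a/n}$, so it is a uniformizer times a unit but not itself a unit.

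\textbf{Step 2: parametrize the kernel.} Let $(\bm u,\bm v)\in\Ker(\partial_1E)$ and write $\bm u=W\bm w$. Since $P$ commutes with $Y$ and $(I-P)(I-X^{-1})W=I-P$ by Lemma~\ref{lem:propmat}(v), the kernel equation becomes
\[
Y^{-1}(I-P)\bm w+D\bm v=0 .
\]
Projecting with $P$ gives $P\bm v=0$. Projecting with $I-P$ gives $(I-P)\bm w=(I-Y)(I-P)\bm v$. Therefore the kernel is parametrized freely by $P\bm w$ and $(I-P)\bm v$. This yields the basis given by the columns of
\[
\begin{pmatrix} W\\ 0\end{pmatrix}P,\qquad \begin{pmatrix} W(I-Y)\\ I\end{pmatrix}(I-P).
\]
These are independent, and their number matches $\operatorname{rank}\Ker\partial_1=2m-m=m$ (cf.\ Lemma~\ref{lem:rank}; kernels are free over the PID $\mathcal{O}(a)$).

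\textbf{Step 3: compare with \eqref{eq:kerE}.} The $P$-columns already agree with those in \eqref{eq:kerE}. For the $(I-P)$-columns, put
\[
c\coloneqq -t^{-n}\frac{I-Y^n}{I-Y}(I-P),
\]
which is $(I-P)$ times a diagonal unit by Lemma~\ref{lem:propmat}(iv). Then the second block of the $(I-P)$-part of \eqref{eq:kerE} equals $(I-P)c$.

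For the first block, I apply $W^{-1}=(I-P)(I-X^{-1})+P$ (Lemma~\ref{lem:propmat}(v)) and use the identity $\frac{I-X^m}{I-X}X(I-X^{-1})=X^m-I=(t^n-1)I$. This gives
\[
(I-P)\,W^{-1}\,t^{-n}\frac{I-X^m}{I-X}X(I-P)=t^{-n}(t^n-1)(I-P).
\]
This should coincide with $(I-Y)(I-P)c$, because $(I-Y)\frac{I-Y^n}{I-Y}=(1-t^n)I$. The remaining $P$-component of $W^{-1}(\cdots)$ is some $P$-column combination. So the $(I-P)$-columns of \eqref{eq:kerE} equal $\bigl(W(I-Y),I\bigr)^{\top}(I-P)c$ plus a combination of the $P$-columns. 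The transition matrix is therefore block-triangular with unit diagonal blocks, hence invertible, and \eqref{eq:kerE} is a basis.

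\textbf{Expected obstacle.} The main care needed is the bookkeeping in Step 3: the sign and power-of-$t$ conventions of the notation $\frac{I-A^\ell}{I-A}$, and checking that the $P$-part correction really lies in the span of the $P$-columns. That check is where the third identity of Lemma~\ref{lem:propmat}(v) should be used.
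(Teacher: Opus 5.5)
Your argument is correct and complete, but you organize it differently from the paper.

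\textbf{The paper's route.} It first checks that the columns of \eqref{eq:kerE} lie in $\Ker(\partial_1E)$. It then proves they generate by a reduction: from $P\bm v_2=0$ it subtracts a suitable multiple of the $(I-P)$-columns, using the unit of Lemma~\ref{lem:propmat}(iv), so that the second component vanishes. It then applies Lemma~\ref{lem:propW} to the remaining first component. Linear independence comes only at the end, from the rank count $\operatorname{rank}\Ker(\partial_1E)=m$ supplied by Lemma~\ref{lem:rank}.

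\textbf{Your route.} You solve the kernel outright. Your substitution $\bm u=W\bm w$ carries exactly the content of Lemma~\ref{lem:propW}. It decouples the kernel equation into $P\bm v=0$ and $(I-P)\bm w=(I-Y)(I-P)\bm v$, which yields an auxiliary basis $N$ whose independence is evident. You then show that the matrix in \eqref{eq:kerE} equals $NT$, where $T=P+c+PA(I-P)$ and $A=t^{-n}\frac{I-X^m}{I-X}X$. With respect to $\Ima P\oplus\Ima(I-P)$, $T$ is block upper triangular; its diagonal blocks are the identity and the restriction of $c$ to $\Ima(I-P)$, a diagonal unit by Lemma~\ref{lem:propmat}(iv).

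\textbf{What each buys.} In your route, membership of the given columns in the kernel and their linear independence come for free, since both are inherited from $N$ through the invertible $T$. The appeal to the rank (including your own remark about it) is therefore superfluous. The paper's route is shorter because it works directly with the target columns and never introduces an intermediate basis or a transition matrix.

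\textbf{A correction to your closing remark.} The third identity of Lemma~\ref{lem:propmat}(v) is not needed here. Since $PW^{-1}A(I-P)=PA(I-P)$, the $P$-correction is $WP\,A(I-P)$, which is automatically a combination of the columns of $WP$. That identity is what the paper needs later, in the proof of Theorem~\ref{thm:pretAl}.
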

\noindent
To prove Lemma~\ref{lem:Edel}, we need the following lemma.

\begin{Lemma}\label{lem:propW}
$\mathrm{Ker} (I-P)(I-X^{-1})$ is generated by the columns of $W P$.
\end{Lemma}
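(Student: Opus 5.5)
The plan is to deduce the statement directly from the identities in Lemma~\ref{lem:propmat}(v), using that $W$ is invertible over $\mathcal{O}(a)$ because $\det(I-(I-P)X^{-1})=1$. Set $A\coloneqq (I-P)(I-X^{-1})$. Then
\[
I-(I-P)X^{-1} = A + P, \qquad (A+P)W = I .
\]
We must show two things: the columns of $WP$ lie in $\Ker A$, and every $v\in\Ker A$ is an $\mathcal{O}(a)$-combination of the columns of $WP$.

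\emph{Containment.} The identity $(A+P)W=I$ gives $AW = I - PW$. By Lemma~\ref{lem:propmat}(v), $PW=P$, so $AW = I-P$. Right-multiplying by $P$ and using Lemma~\ref{lem:propmat}(ii),
\[
A\,WP = (I-P)P = 0_{m,m}.
\]
Hence every column of $WP$ lies in $\Ker A$.

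\emph{Generation.} Let $v\in\Ker A$ and put $u\coloneqq W^{-1}v=(A+P)v$. Since $Av=0$, we get $u=Pv$. In particular $u\in\operatorname{Im}(P)$, so $u=Pu$ by idempotence. Therefore
\[
v = Wu = WPu ,
\]
which exhibits $v$ as a combination of the columns of $WP$ with coefficients from $u\in\mathcal{O}(a)^m$. Thus $\Ker A=\operatorname{Im}(WP)$.

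For later use in Lemma~\ref{lem:Edel}, where a basis is wanted, note that the columns of $WP$ corresponding to indices with $P_{ii}=1$ are linearly independent, since $W$ is invertible and those columns of $P$ are standard basis vectors; the remaining columns are zero. Equivalently, $W$ carries $\operatorname{Im}P$ isomorphically onto $\Ker A$.

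The only point needing care is that $W^{-1}=A+P$ exactly. This follows from $I-(I-P)X^{-1}=(I-P)(I-X^{-1})+P$, which holds because $(I-P)+P=I$, and it is already used in the proof of Lemma~\ref{lem:propmat}(v). With that in hand the argument is pure linear algebra.
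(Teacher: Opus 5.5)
Your proof is correct and follows essentially the paper's route: containment comes from $(I-P)(I-X^{-1})W = I-P$ (Lemma~\ref{lem:propmat}(v)) followed by right-multiplication by $P$, and generation comes from showing that $W^{-1}v$ lies in $\operatorname{Im}(P)$ and then applying $W$. The only difference is cosmetic: you read off $W^{-1}v = Pv$ directly from $W^{-1} = (I-P)(I-X^{-1}) + P$, whereas the paper shows $(I-P)W^{-1}v = 0$ and then uses $\Ker(I-P) = \operatorname{Im}(P)$.
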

\begin{proof}
By Lemma~\ref{lem:propmat}~(ii) and (v),
$
(I-P)(I-X^{-1})(WP)=(I-P)P=0_{m,m},
$
hence each column of $WP$ is in $\Ker\bigl((I-P)(I-X^{-1})\bigr)$.
Hence $\bm{v}\in \Ker\!\bigl((I-P)(I-X^{-1})\bigr)$ is a linear combination of the columns of $WP$.
Using Lemma~\ref{lem:propmat}~(v), we obtain
\[(I-P)W^{-1}\bm v = (I-P)(I-X^{-1})WW^{-1}\bm v = (I-P)(I-X^{-1})\bm v = 0_{m,1},\]
hence $W^{-1}\bm v\in \Ker(I-P)$.
By Lemma~\ref{lem:propmat}\,(ii), $\Ker(I-P)=\Ima(P)$, so $W^{-1}\bm{v}$ is a linear combination of the columns of $P$.
Applying $W$ to both sides shows that $\bm{v}$ is a linear combination of the columns of $WP$.
Therefore the columns of $WP$ generate $\Ker\!\bigl((I-P)(I-X^{-1})\bigr)$.
\end{proof}
\begin{proof}[Proof of Lemma \ref{lem:Edel}]
By Lemma~\ref{lem:propmat}, we have
\begin{equation*}\label{eq:leftright}
( \partial_1 E ) \left(
\begin{pmatrix} 
 t^{-n} \dfrac{I - X^{m}}{I - X} X\\[15pt]
 -t^{-n} \dfrac{I - Y^{n}}{I - Y}
\end{pmatrix} (I-P)
+
\begin{pmatrix}
 W\\[6pt]
 0_{m,m}
\end{pmatrix} P \right)
= 0_{m,m} .
\end{equation*}
Thus each column vector of \eqref{eq:kerE} lies in $\Ker(\partial_1 E)$.
We next show that $\Ker(\partial_1 E)$ is generated by the columns of \eqref{eq:kerE}.
Write $\bm{v}=\bm{v}_1\oplus \bm{v}_2\in \mathcal{O}(a)^m\oplus \mathcal{O}(a)^m$ for $\bm{v}\in \Ker(\partial_1 E)$, i.e.
\begin{equation*}\label{eq:ker-condition}
e^{-2 \pi \sqrt{-1} a/n} t^{-1} (I-P)(I-X^{-1}) \bm{v}_1
+
(I-P)(I-Y^{-1}) \bm{v}_2
+
P \bm{v}_2
= 0 .
\end{equation*}
Left-multiplying \eqref{eq:ker-condition} by $P$ and using Lemma~\ref{lem:propmat}\,(ii) yields $P\bm{v}_2=0$.
Define
\begin{equation}\label{eq:vv1}
\bm{v}' 
\coloneqq 
\bm{v} 
- 
\dfrac{1-e^{-2 \pi \sqrt{-1} a/n} t}{1-t^n}
\begin{pmatrix}
 \dfrac{I - X^{m}}{I - X} X\\[15pt]
 - \dfrac{I - Y^{n}}{I - Y}
\end{pmatrix} (I-P)\bm{v}_2 .
\end{equation}
By Lemma~\ref{lem:propmat} and $P\bm{v}_2=0$, we have the decomposition
$
\bm{v}' = \bm{v}'_1 \oplus 0_{m,1} \in \mathcal{O}(a)^m \oplus \mathcal{O}(a)^m.
$
Moreover, from $\bm{v} \in \Ker(\partial_1 E)$, we have $\bm{v}' \in \Ker(\partial_1 E)$.
Therefore,
\[
e^{-2 \pi \sqrt{-1} a/n} t^{-1} (I-P)(I-X^{-1}) \bm{v}'_1
=
(\partial_1 E) \bm{v}'
=
0_{m,1}.
\]
By Lemma~\ref{lem:propW}, $\bm{v}'_1$ is a linear combination of the columns of $WP$.
Consequently, by \eqref{eq:vv1}, $\bm{v}$ is a linear combination of the columns in \eqref{eq:kerE}, so the latter generate $\Ker(\partial_1 E)$.

Finally, since $E$ is invertible, Lemma~\ref{lem:rank} gives
$\operatorname{rank}(\partial_1 E)=\operatorname{rank}(\partial_1)=m$, hence
$\dim \Ker(\partial_1 E)=m$.
As \eqref{eq:kerE} has exactly $m$ columns, these columns form a basis for $\Ker(\partial_1 E)$.
\end{proof}

\noindent
Next, we describe a basis for $\Ima(\partial_2)$ explicitly.
\begin{Proposition}\label{prop:image}
A basis for $\Ima \partial_2$ is given by the columns of 
\begin{equation*}\label{eq:Imapar2}
\begin{pmatrix}
 t^{-n} \dfrac{I - X^{m}}{I - X} X\\[15pt]
 -t^{-n} \dfrac{I - Y^{n}}{I - Y} Y
\end{pmatrix}.
\end{equation*}
\end{Proposition}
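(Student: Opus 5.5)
Denote by $C_1$ the first block column of $\partial_2$, i.e.\ the $2m\times m$ matrix in the statement. Since its $(2,1)$-block $-t^{-n}\frac{I-Y^n}{I-Y}Y$ is diagonal with nonzero entries, the columns of $C_1$ are linearly independent over $\mathcal{O}(a)$. The proof therefore reduces to showing that every column of the second block column $C_2$ of $\partial_2$ lies in the $\mathcal{O}(a)$-span of the columns of $C_1$. Concretely, we will look for an $m\times m$ matrix $Z$ over $\mathcal{O}(a)$ with $C_2=C_1 Z$, and then conclude $\Ima\partial_2=\Ima C_1$.

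To find $Z$, we use the chain complex relation $\partial_2\partial_3=0$ from Proposition~\ref{prop:twicell}. Writing $\partial_3=\binom{I-X^{-s}}{t^{-n}(I-M^{-1})}$, this relation reads
\[
C_1(I-X^{-s})+C_2\,t^{-n}(I-M^{-1})=0 .
\]
Since $M^{mn}=t^nI$, we have $\det(I-M^{-1})\neq0$, so formally $C_2=-t^{n}C_1(I-X^{-s})(I-M^{-1})^{-1}$ and $Z=-t^n(I-X^{-s})(I-M^{-1})^{-1}$. The issue is that $I-M^{-1}$ need not be invertible over $\mathcal{O}(a)$, because the eigenvalues of $M$ may equal $1$ at $t=e^{2\pi\sqrt{-1}a/n}$.

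This is the main obstacle. The first approach I would try avoids inverting $I-M^{-1}$ and instead exhibits $Z$ directly from the block entries. The top entry of $C_2$ is $-X^{1-s}\frac{I-X^s}{I-X}\frac{I-M^{mn}}{I-M}M$. Since $\frac{I-M^{mn}}{I-M}(I-M^{-1})=(I-M^{mn})M^{-1}=(1-t^n)M^{-1}$, the factor $\frac{I-M^{mn}}{I-M}$ plays the role of $(1-t^n)(I-M^{-1})^{-1}$ without any division. So I would take the candidate
\[
Z=-\,X^{-s}(I-X^{s})\cdot\frac{I-X}{I-X^m}\cdots ,
\]
or more cleanly look for $Z$ such that $t^{-n}\frac{I-X^m}{I-X}XZ$ equals the top entry. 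Because $\frac{I-X^m}{I-X}(I-X)=(1-t^n)I$ and $\frac{1-t^n}{1-e^{-2\pi\sqrt{-1}a/n}t}$ is a unit by Lemma~\ref{lem:propmat}(iv), the needed division by $1-t^n$ can be absorbed against the factor $I-M^{mn}=(1-t^n)I$ in $\frac{I-M^{mn}}{I-M}$ when that factor is multiplied by $I-M$. Concretely I expect
\[
Z=-t^{n}X^{-s}\frac{I-X^s}{I-X}(I-X)\,(1-t^n)^{-1}\frac{I-M^{mn}}{I-M}M ,
\]
which lies over $\mathcal{O}(a)$ only if $(1-t^n)^{-1}\frac{I-M^{mn}}{I-M}$ is integral. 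That holds precisely when the eigenvalue of $M$ that becomes $1$ is simple in the appropriate sense. I would verify this using the unit statement of Lemma~\ref{lem:propmat}(iv), applied to $M$ in place of $Y$ after diagonalizing $M$; note $M^m$ is a scalar-times-diagonal matrix built from $Y$.

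Having defined $Z$, the top block identity $C_2^{\mathrm{top}}=C_1^{\mathrm{top}}Z$ is then immediate from the factorization above. The bottom block identity follows from $\partial_2\partial_3=0$: multiplying $C_2-C_1Z$ on the right by $I-M^{-1}$ gives zero, and the bottom block of $C_1Z-C_2$ is determined by its top block up to this relation. Alternatively, I would verify the bottom block directly using $M=X^sY^r$ and $Y^n=t^nI$.

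A fallback, if integrality of $Z$ fails, is a rank and saturation argument. We have $\Ima\partial_2\supseteq\Ima C_1$, both have rank $m$ by Lemma~\ref{lem:rank}, and $\Ima C_1\subseteq\Ker\partial_1$. It would then suffice to compare $\Ima\partial_2$ and $\Ima C_1$ inside the explicit basis of $\Ker\partial_1$ from Proposition~\ref{prop:kernel}. In that basis, $C_1$ corresponds to the matrix $(I-P)+P\cdot(\text{something})$, and showing that the columns of $C_2$ have coordinates in $\mathcal{O}(a)$ relative to $C_1$ becomes a statement about the $P$-block only.
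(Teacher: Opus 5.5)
There is a genuine gap at exactly the step you call the main obstacle. That obstacle is not real: $I-M^{-1}$ \emph{is} invertible over $\mathcal{O}(a)$ for every $a\in\Z_n\setminus\{0\}$. You never establish this, and the heuristics you give for it are incorrect: no eigenvalue of $M$ ``becomes $1$'', and $M^m$ is a scalar matrix, not ``scalar-times-diagonal built from $Y$''. As written, you have not shown that $Z$ has entries in $\mathcal{O}(a)$.

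Your fallback does not close the gap either. Knowing $\Ima C_1\subseteq\Ima\partial_2$ with equal ranks does not give equality of submodules over $\mathcal{O}(a)$; you would still need an index or saturation argument that you only gesture at.

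The missing fact is a short computation. Let $\tau=\begin{pmatrix}0_{m-1,1} & I_{m-1}\\ t & 0_{1,m-1}\end{pmatrix}$, so $X=\tau^n$ and $\tau^m=tI$. Let $D=\mathrm{diag}(e^{2\pi\sqrt{-1}b_1/n},\dots,e^{2\pi\sqrt{-1}b_m/n})$, so $Y=tD$.
\begin{itemize}
\item Since $ns=1-mr$, we get $M=X^sY^r=\tau^{1-mr}t^rD^r=\tau D^r$.
\item This is a weighted cyclic permutation matrix whose weights multiply to $t\,e^{2\pi\sqrt{-1}\,r(b_1+\cdots+b_m)/n}=t$.
\item Hence $M^m=tI$, the characteristic polynomial of $M$ is $\lambda^m-t$, and $\det(I-M)=1-t$.
\item Because $e^{2\pi\sqrt{-1}a/n}\neq 1$, the element $1-t$ is a unit in $\mathcal{O}(a)$. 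Equivalently, at $t=e^{2\pi\sqrt{-1}a/n}$ the eigenvalues of $M$ are $m$-th roots of a number $\neq 1$, so none of them equals $1$.
\end{itemize}

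With this in hand, your very first computation is already a complete proof.
\begin{itemize}
\item $Z=-t^n(I-X^{-s})(I-M^{-1})^{-1}$ has entries in $\mathcal{O}(a)$.
\item The relation $\partial_2\partial_3=0$ gives $C_2=C_1Z$ on both blocks at once, since $I-M^{-1}$ is invertible. No separate top-block check is needed.
\item Hence $\Ima\partial_2=\Ima C_1$, and your diagonal $(2,1)$-block argument gives linear independence.
\item Your second candidate for $Z$ is the same matrix, because $(1-t^n)^{-1}\frac{I-M^{mn}}{I-M}=(I-M)^{-1}$.
\end{itemize}

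Once repaired, this is essentially the paper's argument. The paper observes that $\begin{pmatrix} I & I-X^{-s}\\ 0_{m,m} & t^{-n}(I-M^{-1})\end{pmatrix}$ is invertible over $\mathcal{O}(a)$, precisely because $t^{-n}(I-M^{-1})$ is. So $\Ima\partial_2$ is spanned by $\partial_2\begin{pmatrix} I\\ 0_{m,m}\end{pmatrix}=C_1$, using $\partial_2\partial_3=0$. The only difference is that the paper gets independence from exactness $\Ker\partial_2=\Ima\partial_3$, while you use the diagonal block.
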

\begin{proof}
Consider the matrix
\[
\begin{pmatrix}
I & I - X^{-s} \\
0_{m,m} & t^{-n} (I - M^{-1})
\end{pmatrix},
\]
which is obtained from $\partial_3$ by adding an extra column block $\begin{pmatrix}
I \\
0_{m,m}
\end{pmatrix}$.
This matrix is invertible over $\mathcal{O}(a)$, since its diagonal blocks $I$ and $t^{-n}(I-M^{-1})$ are invertible.
Hence its column vectors form a basis for $C_2\!\left(X_{T(m,n)};\mathcal{O}(a)^m\right)$.
On the other hand, by \eqref{eq:otherord} we have $\Ker(\partial_2)=\Ima(\partial_3)$, and $\Ima(\partial_3)$ is generated by the columns of $\partial_3$.
Therefore a basis for $\Ima(\partial_2)$ is given by
\begin{equation*}
\partial_2
\begin{pmatrix}
I \\
0_{m,m}
\end{pmatrix}
=
\begin{pmatrix}
 t^{-n} \dfrac{I - X^{m}}{I - X} X\\[15pt]
 -t^{-n} \dfrac{I - Y^{n}}{I - Y} Y
\end{pmatrix}
\end{equation*}
as claimed.
\end{proof}

Finally, we give the proof of Theorem~\ref{thm:pretAl}.
Define
\begin{equation}\label{eq:Omega}
\Theta_{\bm{b}}(a) \coloneqq (I-P) + P \left( t^{-n} \dfrac{I - X^{m}}{I - X} X \right) P .
\end{equation}
We briefly recall the definitions of $P$ and $X$.
For $a \in \Z_n \setminus \{ 0 \}$ and an $m$-tuple $\bm{b} = (b_1, b_2, \ldots, b_m) \in \Z_n^m$ with $b_1 + b_2 + \cdots + b_m = 0$, the matrix $P$ is the $m \times m$ diagonal with $(i,i)$-entry is defined to be $1$ if $b_i \neq -a$, and $0$ otherwise.
Moreover,
\[
X =
\begin{pmatrix}
0_{m-1,1} & I_{m-1} \\
t & 0_{1,m-1} 
\end{pmatrix}^{n},
\]
where $I_{m-1}$ is the $(m-1) \times (m-1)$ identity matrix.

\begin{proof}[Proof of Theorem \ref{thm:pretAl}]
Lemma~\ref{lem:propmat} implies the following equalities:
\begin{equation*}
\begin{aligned}
\begin{pmatrix}
 t^{-n} \dfrac{I - X^{m}}{I - X} X\\[15pt]
 -t^{-n} \dfrac{I - Y^{n}}{I - Y} Y
\end{pmatrix} (I-P)
&=
\begin{pmatrix}
 t^{-n} \dfrac{I - X^{m}}{I - X} X\\[15pt]
 -t^{-n} \dfrac{I - Y^{n}}{I - Y} Y
\end{pmatrix} (I-P)\, \Theta_{\bm{b}}(a), \\[10pt]
\begin{pmatrix}
 t^{-n} \dfrac{I - X^{m}}{I - X} X\\[15pt]
 -t^{-n} \dfrac{I - Y^{n}}{I - Y} Y
\end{pmatrix} P
&=
\begin{pmatrix}
 W\\[6pt]
 -V (I - X^{-1}) W
\end{pmatrix} P \, \Theta_{\bm{b}}(a) .
\end{aligned}
\end{equation*}
Thus, right-multiplying the basis for $\Ker (\partial_1)$ in Proposition~\ref{prop:kernel} by $\Theta_{\bm{b}}(a)$ yields the basis for $\Ima (\partial_2)$ in Proposition~\ref{prop:image}.
Hence,
$\Ker (\partial_1) / \Ima (\partial_2)
\cong
\mathcal{O}(a)^m \big/ \Theta_{\bm{b}}(a) \, \mathcal{O}(a)^m .$
This proves Theorem~\ref{thm:pretAl}.
\end{proof}

\subsection{Proof of Theorem \ref{thm:SptBL}}\label{sec:SptBL}
In this section we prove Theorem~\ref{thm:SptBL}.
The outline of the proof is as follows.
First, we describe generators for $H^2\!\left(X_{T(m,n)}; \mathcal{O}(a)^m\right)$ (see Proposition~\ref{prop:twiH2}).
Using these generators, we construct generators of
$H^1\!\left(X_{T(m,n)}; \bigl(\mathcal{O}(a)/(\Delta_{T(m,n)}^{\rho(\bm{b})})\bigr)^m\right)$
and
$H_1\!\left(X_{T(m,n)}; \mathcal{O}(a)^m\right)$
via the Bockstein map and the Poincar\'e duality map (see Lemmas~\ref{lem:ba2} and~\ref{lem:twi1}, respectively).
Finally, from these generators, we compute the matrix presentation of the twisted Blanchfield pairing directly from the definition.

Throughout this subsection we assume $a \in \Z_n \setminus \{0 , -b_1, -b_2 , \ldots, -b_m\}$.
Note that, under this assumption, the matrices $I-M$ and $I-Y$ are invertible over $\mathcal{O}(a)$.
Unless explicitly stated otherwise, all modules are over $\mathcal{O}(a)$.

We begin by describing generators of $H^2 (X_{T(m,n)}; \mathcal{O}(a)^m)$.
\begin{Proposition}\label{prop:twiH2}
\noindent
A basis for $\Ker (\partial_3^{\# \top})$ is given by the columns of 
\begin{equation}\label{eq:twiH21}
\begin{pmatrix}
I \\
- t^{-n} (I-M)^{-1} (I - X^s)
\end{pmatrix}.
\end{equation}
\end{Proposition}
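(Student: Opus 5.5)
Write $\partial_3^{\#\top}$ as a block row, using condition (I), which gives $X^{\#\top}=X^{-1}$ and $M^{\#\top}=M^{-1}$. Since $\partial_3$ is the block column with entries $I-X^{-s}$ and $t^{-n}(I-M^{-1})$, we get
\[
\partial_3^{\#\top}=\begin{pmatrix} I-X^{s} & t^{n}(I-M)\end{pmatrix}\colon \mathcal{O}(a)^m\oplus\mathcal{O}(a)^m\longrightarrow \mathcal{O}(a)^m .
\]
The key observation is that, under the standing assumption $a\notin\{0,-b_1,\dots,-b_m\}$, the block $t^n(I-M)$ is invertible over $\mathcal{O}(a)$, as noted just before the proposition. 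The kernel can therefore be solved for explicitly in the second coordinate. For $\bm{u}\oplus\bm{w}$ in the kernel we have $(I-X^s)\bm{u}+t^n(I-M)\bm{w}=0$, hence
\[
\bm{w}=-t^{-n}(I-M)^{-1}(I-X^s)\bm{u}.
\]
It follows that $\bm{u}\oplus\bm{w}$ is the image of $\bm{u}$ under the matrix \eqref{eq:twiH21}.

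The argument then has two steps. First, check directly that $\partial_3^{\#\top}$ applied to \eqref{eq:twiH21} is $(I-X^s)-(I-X^s)=0_{m,m}$, so the columns lie in the kernel. Second, note that the computation above shows every kernel element is an $\mathcal{O}(a)$-combination of the columns, so they generate. Linear independence is immediate because the top block of \eqref{eq:twiH21} is $I$. Hence the columns form a basis, of rank $m$, which is consistent with Lemma~\ref{lem:rank}: the map $\partial_3^{\#\top}$ is surjective, so its kernel has rank $2m-m=m$.

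The only real obstacle is justifying that $I-M$ is invertible over $\mathcal{O}(a)$; this justification is what makes the statement hold. I would argue it as follows. The identity $M^{mn}=t^nI$ implies that every eigenvalue $\lambda$ of $M$ satisfies $\lambda^{mn}=t^n$. To conclude that $\det(I-M)$ is a unit in the local ring, it suffices to show that $\det(I-M)$ does not vanish at $t=e^{2\pi\sqrt{-1}a/n}$. I would verify this by computing the characteristic data of $M=X^sY^r$ from the explicit form of $X$ and $Y$. Alternatively, since $M$ is conjugate to a meridian image and $\mu$ generates the abelianization, I would relate $\det(I-M)$ to a product of factors $1-\zeta t$ over roots of unity $\zeta$ determined by the $b_i$, exactly as for $\det(I-PY^{-1})$ earlier. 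The hypothesis $a\neq -b_i$ should then rule out a zero at the base point. If this is taken as given, per the remark preceding the proposition, the proof reduces to the two-line verification above.
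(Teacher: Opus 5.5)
Your proposal is correct and follows the paper's proof: write $\partial_3^{\#\top}=\bigl(I-X^{s}\;\; t^{n}(I-M)\bigr)$, check that the columns are killed, solve for the second block using invertibility of $I-M$ over $\mathcal{O}(a)$, and read off independence from the identity top block. One small correction to your side discussion of invertibility: $M=X^sY^r=ZD^r$, where $Z$ is the cyclic matrix with $Z^n=X$ and $D=t^{-1}Y$, so $M$ is a weighted $m$-cycle with $M^m=tI$; hence $\det(I-M)=1-t$, and it is the hypothesis $a\neq 0$, not $a\neq -b_i$, that makes $I-M$ invertible over $\mathcal{O}(a)$.
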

\begin{proof}
A direct computation shows that
$
\partial_3^{\# \top} 
\begin{pmatrix}
I \\
- t^{-n} (I-M)^{-1} (I-X^s)
\end{pmatrix}
= 0_{2m,m} .
$
Thus the column vectors of \eqref{eq:twiH21} are in $\Ker(\partial_3^{\# \top})$.
We now show that $\Ker(\partial_3^{\# \top})$ is generated by these columns.
Let $\bm{v} = \bm{v}_1 \oplus \bm{v}_2 \in \mathcal{O}(a)^m \oplus \mathcal{O}(a)^m$ be an element of $\Ker (\partial_3^{\# \top})$.
Then
$
(I - X^s) \bm{v}_1 = - t^{n} (I-M) \bm{v}_2 .
$
Since $I-M$ is invertible over $\mathcal{O}(a)$, we obtain
$
\bm{v}
=
\begin{pmatrix}
I \\
- t^{-n} (I-M)^{-1} (I-X^s)
\end{pmatrix}
\bm{v}_1,
$
so $\bm{v}$ is a linear combination of the columns of \eqref{eq:twiH21}.
In particular, these columns generate $\Ker(\partial_3^{\# \top})$.
Finally, since $I$ is invertible, the columns of \eqref{eq:twiH21} are linearly independent.
This proves the proposition.
\end{proof}
\noindent
Next, we describe generators of
$H^1\!\bigl( X_{T(m,n)}; \bigl(\mathcal{O}(a)/(\Delta_{T(m,n)}^{\rho(\bm{b})})\bigr)^m \bigr)$ via the Bockstein homomorphism.
\begin{Lemma}\label{lem:ba2}
The columns of
\begin{equation*}
\begin{pmatrix}
t^{n(1-m)/2} \, \delta_m (t) \, \mathrm{adj} \!\left( \dfrac{I - X^m}{I - X} \right) \\
0_{m,m}
\end{pmatrix} .
\end{equation*}
give a basis for 
$\Ker ( \partial_2^{\# \top} \otimes \mathcal{O}(a)/(\Delta_{T(m,n)}^{\rho(\bm{b})})  )$ as an $\mathcal{O}(a)/(\Delta_{T(m,n)}^{\rho(\bm{b})})$-module.
Here, $\mathrm{adj}\,A$ denotes the adjugate of a matrix $A$, and
\begin{equation}\label{eq:delta}
\delta_m(t) =
\left\{
\begin{aligned}
& \dfrac{1}{t^{1/2} - t^{-1/2}}, && \text{if $m/2 \in \Z$,} \\
& 1, && \text{otherwise.}
\end{aligned}
\right.
\end{equation}
Moreover, with respect to this basis and the basis in Proposition~\ref{prop:twiH2}, 
the Bockstein homomorphism is presented by the identity matrix $I$.
\end{Lemma}
\begin{proof}
Note that $\det \Theta_{\bm{b}}(a)  = t^{\,n(1-m)} (1 - t^n)^{m-1} .$
Thus, from Theorem \ref{thm:pretAl}, we may take
\begin{equation}\label{eq:twiO}
\Delta_{T(m,n)}^{\rho(\bm{b})} = t^{\,n(1-m)/2} (1 - t^n)^{m-1} \, \delta_m(t) .
\end{equation}
Recall that the map $\partial_2^{\# \top} / \Delta_{T(m,n)}^{\rho(\bm{b})} : C^1(X_{T(m,n)}; (\mathcal{O}(a)/\Delta_{T(m,n)}^{\rho(\bm{b})} )^m) \to C^2(X_{T(m,n)}; \mathcal{O}(a)^m)$ induces the Bockstein homomorphism (see Remark~\ref{rmk:bock}).
We compute
\begin{equation*}
\dfrac{1}{\Delta_{T(m,n)}^{\rho(\bm{b})}} \,
\partial_2^{\# \top}
\begin{pmatrix}
t^{n(1-m)/2} \, \delta_m (t) \, \mathrm{adj} \!\left( \dfrac{I-X^m}{I-X} \right) \\
0_{m,m}
\end{pmatrix}
=
\begin{pmatrix}
I \\
- t^{-n} (I-M)^{-1} (I-X^s)
\end{pmatrix}
\end{equation*}
from Remark~\ref{rmk:adj} below.
Since the Bockstein homomorphism is an isomorphism, this proves the lemma.
\end{proof}

\begin{Remark}\label{rmk:adj}
We have
$
\dfrac{I - X^s}{I - X} \,
\mathrm{adj} \!\left( \dfrac{I - X^m}{I - X} \right)
= (1 - t^n)^{m-2} (I - X^s).
$
This equality follows from right-multiplying by $\mathrm{adj} \!\left( \dfrac{I - X^m}{I - X} \right)$ in the equality
\[
\dfrac{I - X^s}{I - X} (I - X^m)
=
(I - X^s) \dfrac{I - X^m}{I - X} .
\]
\end{Remark}
\begin{Remark}
In \cite{CKP}, it is shown that the twisted Alexander polynomial associated to the Casson--Gordon type metabelian representation $\rho(\bm{b})$ with coefficients in $\Lambda_\C$ is
\begin{equation}\label{eq:twi1}
\Delta_{T(m,n)}^{\rho(\bm{b})}
=
\dfrac{ (1 - t^n)^{m-1} }{ (1 - t) \prod_{i=1}^{m} (1-e^{2 \pi \sqrt{-1} b_i/n} t) } .
\end{equation}
If $a \in \Z_n \setminus \{0, -b_1, -b_2, \ldots, -b_m\}$, then over the local ring $\mathcal{O}(a)$ we have that each factor $1-e^{2 \pi \sqrt{-1} b_i/n} t$ and $1-t$ is a unit.
Hence \eqref{eq:twi1} agrees with \eqref{eq:twiO} up to multiplication by a unit in $\mathcal{O}(a)$.
\end{Remark}
\noindent
From the basis in Proposition~\ref{prop:twiH2} and the matrix presentation of the Poincar\'e duality map in Proposition~\ref{prop:twicell}, we obtain generators of $H_1\!\left(X_{T(m,n)}; \mathcal{O}(a)^m\right)$.
\begin{Lemma}\label{lem:twi1}
The columns of
\begin{equation*}
\begin{pmatrix}
X \dfrac{I - X^s}{I - X} Y^{r} (I-M)^{-1} (I - Y^{-r}) \\[15pt]
Y \dfrac{I - Y^{r}}{I - Y} (I-M)^{-1} (I - X^{s})
\end{pmatrix}
\end{equation*}
give a basis for $\Ker \partial_1$.
With respect to this basis and the basis in Proposition~\ref{prop:twiH2}, the Poincar\'e duality map is represented by the identity matrix $I$.
\end{Lemma}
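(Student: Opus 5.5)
The plan is to read Lemma~\ref{lem:twi1} directly off the Poincar\'e duality matrix of Proposition~\ref{prop:twicell}. Denote that $2m\times 2m$ block matrix by $\mathcal{P}$ and the matrix \eqref{eq:twiH21} of Proposition~\ref{prop:twiH2} by $K$. The main step is to show that the displayed matrix equals $\mathcal{P}K$. Once this holds, the Poincar\'e duality map sends the $j$-th column of $K$ to the $j$-th column of the displayed matrix. Hence it is represented by $I$ with respect to these bases.

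First I compute $\mathcal{P}K$ blockwise. The lower block is
\[-t^{n}\frac{I-Y^r}{I-Y}Y\cdot\bigl(-t^{-n}(I-M)^{-1}(I-X^s)\bigr)=Y\frac{I-Y^r}{I-Y}(I-M)^{-1}(I-X^s),\]
using that $Y$ and $\frac{I-Y^r}{I-Y}$ commute. This matches the statement. The upper block is
\[-X\frac{I-X^s}{I-X}\bigl(I-Y^r(I-M)^{-1}(I-X^s)\bigr).\]
To match the claimed $X\frac{I-X^s}{I-X}Y^r(I-M)^{-1}(I-Y^{-r})$, it suffices to prove the matrix identity
\[
-(I-M)+Y^r(I-X^s)=Y^r(I-Y^{-r}),
\]
obtained by multiplying on the right by $(I-M)$; note $I-M$ is invertible over $\mathcal{O}(a)$ under our hypothesis on $a$. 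The left side is $-I+M+Y^r-Y^rX^s$ and the right side is $Y^r-I$, so we need $M=Y^rX^s$. We know $M=X^sY^r$, so the remaining point is that $X^s$ and $Y^r$ commute, or that the order can be rearranged. This is the one step I expect to require care. I would try to verify $X^sY^r=Y^rX^s$ directly from the explicit shift and diagonal forms. If it fails, I would instead redo the computation with $(I-M)^{-1}$ placed appropriately, using $M^{-1}$ and commutation of $M$ with $X^s$. An alternative route is to check directly that $X\frac{I-X^s}{I-X}=\frac{I-X^s}{I-X}X$, factor differently, and use $\mu=x^sy^r$ in the group, which gives $M=X^sY^r$ exactly as written.

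Next, the PD map sends the basis of $H^2$ to cycles, so the columns lie in $\operatorname{Ker}\partial_1$ up to $\operatorname{Im}\partial_2$. In our complex, $C^2$ has no coboundary correction that matters here because $\operatorname{Ker}\partial_3^{\#\top}$ is used directly. I would nonetheless double-check $\partial_1\mathcal{P}K=0$ by a short computation using $(I-X^{-1})X\frac{I-X^s}{I-X}=I-X^s$ and $(I-Y^{-1})Y\frac{I-Y^r}{I-Y}=I-Y^r$. This reduces the check to $(I-X^s)Y^r(I-M)^{-1}(I-Y^{-r})+(I-Y^r)(I-M)^{-1}(I-X^s)=0$, which again follows from the relation between $M$, $X^s$ and $Y^r$.

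Finally, for the basis claim, $\operatorname{Ker}\partial_1$ is free of rank $m$ by Lemma~\ref{lem:rank} and Proposition~\ref{prop:kernel}. Proposition~\ref{prop:twiH2} shows $\operatorname{Ker}\partial_3^{\#\top}$ has the columns of $K$ as a basis. The chain-level PD map must therefore restrict to an isomorphism $\operatorname{Ker}\partial_3^{\#\top}\to\operatorname{Ker}\partial_1$, and the image of a basis is a basis. To establish this isomorphism, I would check invertibility of the composite against the basis of Proposition~\ref{prop:kernel} via a determinant computation, comparing with $\det\Theta_{\bm b}(a)$.
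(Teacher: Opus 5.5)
Your strategy is what the paper does: apply the duality matrix $\mathcal{P}$ of Proposition~\ref{prop:twicell} to the basis $K$ of Proposition~\ref{prop:twiH2} and compare with the displayed matrix. Your lower block is correct. The upper block, however, is reduced to a false statement. You cannot clear $(I-M)^{-1}$ in $Y^r(I-M)^{-1}(I-X^s)$ by multiplying on the right by $I-M$, because that silently commutes $I-X^s$ and $I-Y^{-r}$ past $I-M$. The resulting condition $X^sY^r=Y^rX^s$ fails for every admissible $\bm b$, and so does your fallback ``$M$ commutes with $X^s$'', which is equivalent to it. To see this, put $J=\begin{pmatrix}0_{m-1,1}&I_{m-1}\\ t&0_{1,m-1}\end{pmatrix}$. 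Then $X=J^n$, $J^m=tI$ and $ns=1-mr$, so $X^s=t^{-r}J$. Conjugation by $J$ cyclically permutes the diagonal entries $t^re^{2\pi\sqrt{-1}rb_i/n}$ of $Y^r$. These entries all coincide only if all $b_i$ coincide (as $\gcd(r,n)=1$), and then $\sum_i b_i=0$ with $\gcd(m,n)=1$ forces $\bm b=0$.

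The inverse must be cleared from the left. Since $MY^{-r}=X^s$, we have $I=Y^r(I-M)^{-1}(I-M)Y^{-r}=Y^r(I-M)^{-1}(Y^{-r}-X^s)$, so
\[
I-Y^r(I-M)^{-1}(I-X^s)=Y^r(I-M)^{-1}\bigl((Y^{-r}-X^s)-(I-X^s)\bigr)=-Y^r(I-M)^{-1}(I-Y^{-r}).
\]
This gives the claimed top block using only $M=X^sY^r$ as written. Your cycle check is redundant, since the duality map is a chain map. If you keep it, note that $(I-X^{-1})X\frac{I-X^s}{I-X}=-(I-X^s)$, not $I-X^s$. The vanishing then follows from $(I-X^s)Y^r+(I-Y^r)=I-M$ together with the identity above.

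The basis claim is also unsupported as written. A chain homotopy equivalence need not restrict to a bijection between cycle modules, so ``must therefore restrict to an isomorphism $\Ker\partial_3^{\#\top}\to\Ker\partial_1$'' does not follow. Also, $\det\Theta_{\bm b}(a)$ is the wrong thing to compare with: it measures $H_1$, not the index of the image inside $\Ker\partial_1$.

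A direct argument is easy here. Under the hypothesis $a\notin\{0,-b_1,\dots,-b_m\}$, the matrix $I-Y^{-1}$ is invertible over $\mathcal{O}(a)$. Hence $\Ker\partial_1=\{\bm v\oplus(-(I-Y^{-1})^{-1}(I-X^{-1})\bm v)\}$; this is Proposition~\ref{prop:kernel} with $P=I$, $W=I$. So $m$ vectors in $\Ker\partial_1$ form a basis if and only if their top block is invertible. The top block $X\frac{I-X^s}{I-X}Y^r(I-M)^{-1}(I-Y^{-r})$ has unit determinant:
\begin{enumerate}
\item[(1)] $\det\frac{I-X^s}{I-X}=\frac{1-t^{ns}}{1-t^n}=\sum_{k=0}^{s-1}t^{nk}$, which equals $s\neq0$ at $e^{2\pi\sqrt{-1}a/n}$;
\item[(2)] $I-M$ is invertible over $\mathcal{O}(a)$;
\item[(3)] $\det(I-Y^{-r})=\prod_i\bigl(1-e^{-2\pi\sqrt{-1}rb_i/n}t^{-r}\bigr)$ does not vanish at $e^{2\pi\sqrt{-1}a/n}$, since $\gcd(r,n)=1$ and $a\neq-b_i$.
\end{enumerate}
Combined with the corrected computation showing $\mathcal{P}K$ equals the displayed matrix, this gives both assertions of the lemma.
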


We now prove Theorem \ref{thm:SptBL}.
Define
\begin{equation}\label{eq:Psi}
\Psi_{\bm{b}} (a) \coloneqq
t^{n(1-m)/2} \, (1-t^n)^{m-2} \, \delta_m(t) \,
(I - X^{-s})(I-M^{-1})^{-1}(I - Y^{-r}) .
\end{equation}
Recall the definitions of $X$, $Y$, and $M$:
\begin{equation*}
X =
\begin{pmatrix}
0_{m-1,1} & I_{m-1} \\
t & 0_{1,m-1} 
\end{pmatrix}^{n},
\quad 
Y = t \, \mathrm{diag} (e^{2 \pi \sqrt{-1} b_1/n}, e^{2 \pi \sqrt{-1} b_2/n}, \ldots, e^{2 \pi \sqrt{-1} b_m/n}),
\quad
M = X^{s} Y^{r} ,
\end{equation*}
where $I_{m-1}$ is the $(m-1) \times (m-1)$ identity matrix.

\begin{proof}[Proof of Theorem \ref{thm:SptBL}]
By definition, the matrix presentation of $\mathrm{Bl}^{\rho(\bm{b})}_{T(m,n)}$ with respect to the bases in Lemmas~\ref{lem:ba2} and~\ref{lem:twi1} is
\begin{equation*}
\begin{pmatrix}
t^{n(1-m)/2} \, \delta_m(t) \, \mathrm{adj} \!\left( \dfrac{I-X^m}{I-X} \right) \\
0_{m,m}
\end{pmatrix}^{\# \top}
\begin{pmatrix}
X \dfrac{I-X^s}{I-X} Y^{r} (I-M)^{-1} (I-Y^{-r}) \\[15pt]
Y \dfrac{I-Y^{r}}{I-Y} (I-M)^{-1} (I-X^{s})
\end{pmatrix} .
\end{equation*}
A direct computation shows that the product equals $t^{n} \Psi_{\bm{b}}(a)$.
This proves the theorem.
\end{proof}
\begin{Remark}
We have
$
(I - X^{-s})(I-M^{-1})^{-1}(I - Y^{-r})
=
-
(I - Y^{r})(I-M)^{-1}(I - X^{s})
$
which implies
$
\bigl( t^{n} \Psi_{\bm{b}} (a) \bigr)^{\# \top}
=
\Psi_{\bm{b}} (a) .
$
Hence,
\[
\bigl( t^{n} \Psi_{\bm{b}} (a) \bigr)^{\# \top}
-
\bigl( t^{n} \Psi_{\bm{b}} (a) \bigr)
=
t^{n(1-m)/2} \, (1-t^n)^{m-1} \, \delta_m(t) \,
(I - X^{-s})(I-M^{-1})^{-1}(I - Y^{-r}) .
\]
In particular, since $\Delta_{T(m,n)}^{\rho(\bm{b})}= t^{\,n(1-m)/2} (1 - t^n)^{m-1} \, \delta_m(t)$ over $\mathcal{O}(a)$, this shows that the twisted Blanchfield pairing is a Hermitian sesquilinear form with respect to the involution~$\#$.
\end{Remark}

\bibliographystyle{amsalpha}
\nocite{*}
\bibliography{main}

\end{document}